%
%
%
%
%
%
%
\documentclass[fontsize=12pt,a4paper,DIV=10]{scrartcl}
\usepackage[utf8]{inputenc}
\usepackage[T1]{fontenc}
\usepackage[fleqn]{amsmath}
\usepackage{amsthm,amstext,amssymb,amsfonts,mathptmx}
\usepackage[babel,english=british]{csquotes}
\usepackage[english]{babel}
\usepackage[numbers]{natbib}
\usepackage{booktabs,calc,scrpage2,graphicx,subfigure,bm}
\usepackage{listings,xcolor,enumerate}
\usepackage{caption}
\captionsetup{format=hang,labelsep=period,indention=-1.5cm,labelfont=bf,width=.9\textwidth}
\usepackage{hyperref}

\lstset{language=Mathematica}
\lstset{basicstyle={\sffamily\footnotesize},numbers=left,numberstyle=\tiny,numbersep=5pt,
    breaklines=true,
    captionpos={b},
    frame={lines},
	mathescape, columns=flexible,
	tabsize=2
}
\setkomafont{disposition}{\normalfont\bfseries}
\clubpenalty = 10000
\widowpenalty = 10000
\displaywidowpenalty = 10000
\pagestyle{scrheadings}
%
%
\newcommand{\vect}[1]{\ensuremath{{\bm{\mathbf{#1}}}}}
\newcommand{\mat}[1]{\ensuremath{\mathbf{#1}}}
\newcommand{\lattice}[1]{\ensuremath{\Lambda(#1)}}


\newcommand{\patSet}[1]{\ensuremath{\mathcal{P}(#1)}}

\newcommand{\patIndex}[1]{\ensuremath{\mathbb E_{#1}}}
\newcommand{\gGroup}[1]{\ensuremath{\mathcal{G}(#1)}}

\newcommand{\calcMod}[2]{\ensuremath\left.#1\right|_{#2}}
\newcommand{\euler}{\ensuremath{\mathrm{e}}}
\newcommand{\imag}{\ensuremath{\mathrm{i}}}
\hyphenation{Fou-rier}
\hyphenation{sub-groups}
\hyphenation{sub-group}
\newtheorem{thm}{Theorem}
\newtheorem{lem}{Lemma}

\newtheorem{rem}{Remark}
\newtheorem{ex}{Example}
%
%
%
%
%
%
\title{\vspace{-1\baselineskip}\huge The fast Fourier Transform\\ and fast Wavelet Transform\\ for Patterns on the Torus}
\author{Ronny Bergmann\thanks{Institute of Mathematics, University of Lübeck, Ratzeburger Allee 160, 23562 Lübeck, Germany.\newline bergmann@math.uni-luebeck.de}}
\date{June 18, 2012}
\newcommand{\theabstract}{We introduce a fast Fourier transform on regular $d$-dimensional lattices. We investigate properties of congruence class representants, i.e. their ordering, to classify directions and derive a Cooley-Tukey-Algorithm.
Despite the fast Fourier techniques itself, there is also the advantage of this transform to be parallelized efficiently, yielding faster versions than the one-dimensional Fourier transform. These properties of the lattice can further be used to perform a fast multivariate wavelet decomposition, where the wavelets are given as trigonometric polynomials. Furthermore the preferred directions of the decomposition itself can be characterised.
}
\hypersetup{pdfauthor=Ronny Bergmann,
          pdftitle={The fast Fourier Transform and fast Wavelet Transform for Patterns on the Torus},
          pdfsubject=\theabstract,
          plainpages=false,
          pdfstartview=FitH,       
          pdfview=FitH,            
          pdfpagemode=UseOutlines, 
          bookmarksnumbered=true,  
          bookmarksopen=false,      
          bookmarksopenlevel=0,    
          colorlinks=true,         
          linkcolor=blue,
          citecolor=blue!75!black,
          urlcolor=black}
%
%
%
\begin{document}\thispagestyle{empty}
	\maketitle
	\begin{abstract}
		\theabstract
	\end{abstract}
	\paragraph{Keywords.} wavelets, lattices, multivariate fast Fourier transform, periodic multiresolution analysis, Dirichlet wavelets, shift invariant space 
	%
	%
	%
	%
	%
	\section{Introduction} 
	\label{sec:introduction}

	Recently, a framework for multivariate periodic wavelet analysis~\citep{LangemannPrestin2010} was developed, using shift invariant spaces of translates defined by a regular integral matrix $\mat{M}$. We further investigate this periodic multiscale analysis and develop fast algorithms for the decomposition of a periodic multivariate function. This generalises the one-dimensional case of periodic wavelets as described e.g. in~\citep{Narcowich1996,PlonkaTasche1995,Se98}.

	There are many approaches towards decompositions of multivariate functions in terms of wavelets that have been investigated in the last two decades, e.g. curvelets~\citep{Candes2001,CaDo04}, ridgelets~\citep{Candes2003}, contourlets~\citep{DoVetterli2005} or shearlets~\citep{Dahlke2009}. The periodic wavelet transform is defined on a finite set of translates and hence---in contrast to the forementioned wavelets---leads to finite sums by construction. These translates are defined by the pattern of a matrix $\mat{M}$ and lead to a multivariate Fourier transform onto a frequency domain with the shape of a parallelogram. The corresponding multivariate Fourier matrix is a Kronecker product of one-dimensional Fourier matrices, as mentioned by~\citep{Auslander1995}.

	The main challenge for an implementation is the ordering of the elements. While the elements on the real line are naturally ordered, there are many different ways to order the points of a lattice. We combine the idea of using the Smith normal form, as described by Mersereau et al. in~\citep{MeBrGu83,MeSp81} with the theoretical results on abelian groups developed by Auslander et al. in~\citep{Auslander1995,AuJoJo96}. While the former authors use arbitrary decompositions of the regular matrix $\mat{M}$ to derive a multivariate Cooley-Tukey-Algorithm, we use the approach from the latter authors and split the pattern into subgroups. Furthermore, our investigation also improves the algorithm mentioned in~\citep{MeBrGu83} by using an order in terms of specific bases.

	Despite the mathematical motivation as a generalisation of the rectangular multivariate Fourier transform, the lattice based Fourier transform naturally arises in crystallography. Based on X-ray defractions, the shape of the so called unit cell and its internal structure of a crystal is examined, e.g. for proteins~\citep{DrenthMesters2007}. The reciprocal lattice used for the measurements of X-Ray defractions corresponds to the generating group from Section~\ref{sec:preliminaries}. The reciprocal lattice is derived by looking at biorthognal bases~\citep[Chapter 2]{Giacovazzo1992}, which are also used here in Section~\ref{sec:bases_for_the_pattern_and_the_generating_group}.

	The paper starts with some preliminary investigations and notations in Section~\ref{sec:preliminaries}. In Section~\ref{sec:bases_for_the_pattern_and_the_generating_group}, we define and characterise bases for the patterns and use these to address all elements. This also leads to the notion of the dimension of the pattern and describes an ordering with respect to specific bases. Using these orderings, there is no rearrangement necessary for the Fourier transform, neither in time nor in frequency. This is used in Section~\ref{sec:the_fast_fourier_transform_on_matm} to develop a fast algorithm for the Fourier transform on the pattern that can additionally be parallelized. In Section~\ref{sec:fast_wavelet_transform}, we describe properties of bases, especially together with the bases of the dual group---the so called generating group of a matrix $\mat{M}$. These are applied to the multivariate periodic wavelet transform developed in~\citep{LangemannPrestin2010} to obtain a fast algorithm for the decomposition. Furthermore, we are able to use the basis of the pattern and generating group to generalise the one-dimensional scaling property given in~\citep{Se98}, including the characterisation of directions and their transformation from one scaling space to the next. 

	Finally in Section~\ref{sec:example}, we discuss an example for both the Fourier transform and the wavelet transform. The first example is concentrated on computational costs of the multivariate Fourier transform and possible parallelizations. The wavelet transform is presented by decomposing two different box splines.
	%
	%
	%
	%
	%
	\section{Preliminary investigations} 
	\label{sec:preliminaries}
	\subsection{Function spaces} 
	\label{sub:function_space}
	The space of functions under consideration is the Hilbert space $L^2(\mathbb T^d)$ of all square integrable functions on the torus $\mathbb T^d \cong [0,2\pi)^d$ with the inner product
	\begin{equation}\label{eq:inner-product}
	    \langle f,g \rangle 
	  = \frac{1}{(2\pi)^d}\int_{\mathbb T^d}f(\vect{x})\overline{g(\vect{x})}\,d\vect{x},\quad \text{ for } f,g \in L^2(\mathbb T^d)
		\text{.}
	\end{equation}
	Every function $f \in L^2(\mathbb T^d)$ can be decomposed with respect to the monomials $\euler^{\imag\vect{k}^T\circ}\in L^2(\mathbb T^d)$, $\vect{k}\in\mathbb Z^d$, which yields the Fourier series representation
	\begin{equation}\label{eq:fourier-series}
		f(\vect{x}) = \sum_{\vect{k} \in \mathbb Z^d} c_{\vect{k}}(f)\euler^{\imag\vect{k}^T\vect{x}},\quad\text{where}\quad c_{\vect{k}}(f) = \langle f,\euler^{\imag\vect{k}^T\circ}\rangle,\quad\vect{k\in\mathbb Z^d}\text{.}
	\end{equation}
	We denote by $\vect{c}(f) = \left(c_{\vect{k}}(f)\right)_{\vect{k}\in\mathbb Z^d} \in l^2(\mathbb Z^d)$ the generalised sequences which form a Hilbert space with the inner product
	\begin{equation*}
		\langle\vect{c},\vect{d}\rangle = \sum_{\vect{k} \in \mathbb Z^d}c_{\vect{k}}\overline{d_{\vect{k}}},\quad \vect{c},\vect{d}\in l^2(\mathbb Z^d)\text{,}
	\end{equation*}
	where the Parseval equation reads as
	\begin{equation*}
			\langle f, g \rangle = \langle \vect{c}(f),\vect{c}(g) \rangle = \sum_{\vect{k} \in \mathbb Z^d} c_{\vect{k}}(f) \overline{c_{\vect{k}}(g)}\text{.}
	\end{equation*}
	We define for any $\vect{y}\in \mathbb R^d$ the translation operator $T(\vect{y})f = f(\circ-2\pi\vect{y}),$ $f\in L^2(\mathbb T^d)$. A straight forward computation leads to $c_{\vect{k}}(T(\vect{y})f) = \euler^{-2\pi \imag \vect{k}^T \vect{y}}c_{\vect{k}}(f)$. We can restrict $\vect{y}$ to any shifted unit cube, e.g. $[-\tfrac{1}{2},\tfrac{1}{2})^d$, because $\euler^{-2\pi\imag \vect{k}^T\vect{z}} = 1,\quad \vect{k},\vect{z}\in\mathbb Z^d$.
	%
	%
	%
	%
	%
	\subsection{The pattern and the generating group} 
	\label{sub:pattern_and_the_generating_group}
	For any regular matrix $\mat{M} \in \mathbb Z^{d\times d}$, we define the congruence relation for $\vect{h},\vect{k} \in \mathbb Z^d$ with respect to $\mat{M}$ by
	\begin{equation*}
			\vect{h} \equiv \vect{k} \bmod \mat{M} \Leftrightarrow \exists \vect{z} \in \mathbb Z^d: \vect{k} = \vect{h} + \mat{M}\vect{z}\text{.}
	\end{equation*}
	We define the 
	lattice
	\begin{equation*}
		\lattice{\mat{M}} := \mat{M}^{-1}\mathbb Z^d = \{\vect{y}\in\mathbb R^d : \mat{M}\vect{y} \in \mathbb Z^d\}\text{, }	
	\end{equation*}
	note that it is $1$-periodic and define the 
	pattern 
	$\patSet{\mat{M}}$ 
	as any complete set of congruence class representants, e.g. $ \lattice{\mat{M}}\cap[0,1)^d$ or $ \lattice{\mat{M}}\cap\left[-\tfrac{1}{2},\tfrac{1}{2}\right)^d$, which both contain exactly one element of each congruence class with respect to $\bmod\ \mat{I}$ on $\lattice{\mat{M}}$.
	We denote by $[\vect{x}]_{\mat{M}}$ the congruence class of $\vect{x}\in\lattice{\mat{M}}$ and define by 
	\begin{equation*}
	\calcMod{\vect{x}}{\patSet{\mat{M}}}%
	:= [\vect{x}]_{\mat{M}}\cap\patSet{\mat{M}},\quad \vect{x}\in\lattice{\mat{M}}
	\end{equation*}
	the mapping from any point $\vect{x}\in\lattice{\mat{M}}$ of the lattice onto its congruence class representant belonging to $\patSet{\mat{M}}$. Then $(\patSet{\mat{M}}, \calcMod{+}{\patSet{\mat{M}}})$ is an abelian group. 

	Since  $\mat{M} \in\mathbb Z^{d\times d}$ is regular, it can be seen as a bijective map. Hence all the definitions and properties above also hold for the 
	generating group
	$\gGroup{\mat{M}} := \mat{M}\patSet{\mat{M}}$ equipped with $\calcMod{+}{\gGroup{\mat{M}}}$, where $\mat{M}\circ: \patSet{\mat{M}} \to \gGroup{\mat{M}}$ performs a group isomorphism between $(\gGroup{\mat{M}},\calcMod{+}{\gGroup{\mat{M}}})$ and $(\patSet{\mat{M}},\calcMod{+}{\patSet{\mat{M}}})$.

	The Smith normal form is defined by the decomposition
	\begin{equation}\label{eq:SNF}
		\mat{M} = \mat{Q}\mat{E}\mat{R},\quad \mat{Q},\mat{E},\mat{R}\in\mathbb Z^{d\times d},\text{ where } \mat{E} = \operatorname{diag}\left(\epsilon_j\right)_{j=1}^d,
	\end{equation}
	with $|\det \mat{R}| = |\det\mat{Q}| = 1$ and the elementary divisors $\epsilon_j \in \mathbb N$---which exist due to the Theorem on elementary divisors, see e.g. \cite[Chapter 10]{Ko72}---fulfill $\epsilon_j | \epsilon_{j+1},$ $j=1,\ldots,d-1$. Besides the already mentioned isomorphism this also implies that
	\begin{equation}\label{eq:group-isomorphisms}
		\gGroup{\mat{M}} \cong \gGroup{\mat{E}} \cong \patSet{\mat{E}} \cong \patSet{\mat{M}} \cong \mathcal C_{\epsilon_1} \otimes \mathcal C_{\epsilon_2} \otimes \cdots \otimes \mathcal C_{\epsilon_d}\text{,}
	\end{equation}
	where $\mathcal C_{z} = \{0,1,\ldots,z-1\}, z\in\mathbb Z$ denotes the cyclic group $\mathbb Z / z\mathbb Z$. Hence  $|\gGroup{\mat{M}}| = |\patSet{\mat{M}}| = \epsilon_1\cdot\ldots\cdot\epsilon_d = |\det{\mat{M}}| =:m$. We further denote by $d_{\mat{M}} := \#\{\epsilon_j > 1\}$ the number of cycles greater than $1$.

	For any decomposition of a regular matrix $\mat{M}=\mat{J}\mat{N}$, $\mat{N},\mat{J}\in\mathbb Z^{d\times d}$ it holds~\citep{LangemannPrestin2010}, that there exists a unique decomposition for $\vect{y}\in\patSet{\mat{M}}$
	\begin{equation}\label{eq:decomposition}
		\vect{y} = \calcMod{%
		(\vect{x} + \mat{N}^{-1}\vect{z})}{\patSet{\mat{M}}}, \quad\vect{x}\in\patSet{\mat{N}},\ \vect{z}\in\patSet{\mat{J}}
	\end{equation}
	which can also be applied to $\gGroup{\mat{M}^T}$ yielding for $\vect{h}\in\gGroup{\mat{M}^T}$ the unique decomposition 
	\begin{equation*}
		\vect{h}= \calcMod{(\vect{g}+\mat{N}^T\vect{k})}{\gGroup{\mat{M}^T}},\quad\vect{k}\in\gGroup{\mat{J}^T},\ \vect{g}\in\gGroup{\mat{N}^T}\text{.}
	\end{equation*}
	The Fourier transform on the pattern $\patSet{\mat{M}}$ is defined~\citep{ChuiChun:1994} by
		\begin{equation}\label{eq:Fouriermatrix}
			\begin{split}
				\mathcal F(\mat{M}) &= \frac{1}{\sqrt{m}}\left(\euler^{- 2\pi \imag \vect{h}^T\mat{M}^{-1}\vect{g}}\right)_{\vect{h} \in \gGroup{\mat{M}^T},\,\vect{g} \in \gGroup{\mat{M}}}
			= 
			\frac{1}{\sqrt{m}}\left(\euler^{- 2\pi \imag \vect{h}^T\vect{y}}\right)_{\vect{h} \in \gGroup{\mat{M}^T},\, \vect{y} \in \patSet{\mat{M}}},	
			\end{split}
		\end{equation}
	where $\vect{h}\in\gGroup{\mat{M}^T}$ indicates the rows, $\vect{y} \in \patSet{\mat{M}}$ indicates the columns and the equality holds if the ordering of the elements in $\gGroup{\mat{M}}$ and $\patSet{\mat{M}}$ are identical with respect to the bijection $\mat{M}\circ$. The 
	discrete Fourier transform 
	on $\patSet{\mat{M}}$ is defined for a vector $\vect{a} = (a_{\vect{y}})_{\vect{y}\in\patSet{\mat{M}}}\in\mathbb C^m$ arranged in the same ordering as the columns in~\eqref{eq:Fouriermatrix} by
	\begin{equation*}
		\vect{\hat a} = (\hat a_{\vect{h}})_{\vect{h}\in\gGroup{\mat{M}^T}} = \mathcal F(\mat{M})\vect{a},
	\end{equation*}
	where the vector $\vect{\hat a}$ is arranged as the columns of $\mathcal F(\mat{M})$ in~\eqref{eq:Fouriermatrix}.
		%
		%
		\begin{ex}\label{ex:lattice1}
			We a look at the sheared and rotated 2-dimensional lattice generated by the matrix
			\begin{equation}\label{eq:lattice-example}
				\mat{M} = \begin{pmatrix}
					4 &-3\\ 4&5 
				\end{pmatrix}
				=
				\begin{pmatrix}
					1 &-1\\ 1&1 
				\end{pmatrix}
				\begin{pmatrix}
					4 & 1\\ 0&4 
				\end{pmatrix}\text{.}
			\end{equation}
			One way of choosing the pattern $\patSet{\mat{M}}$ is illustrated in Fig.~\ref{fig:Pattern-Example} on the left.
			\begin{figure}
				\centering
				\includegraphics[width=.47\textwidth]{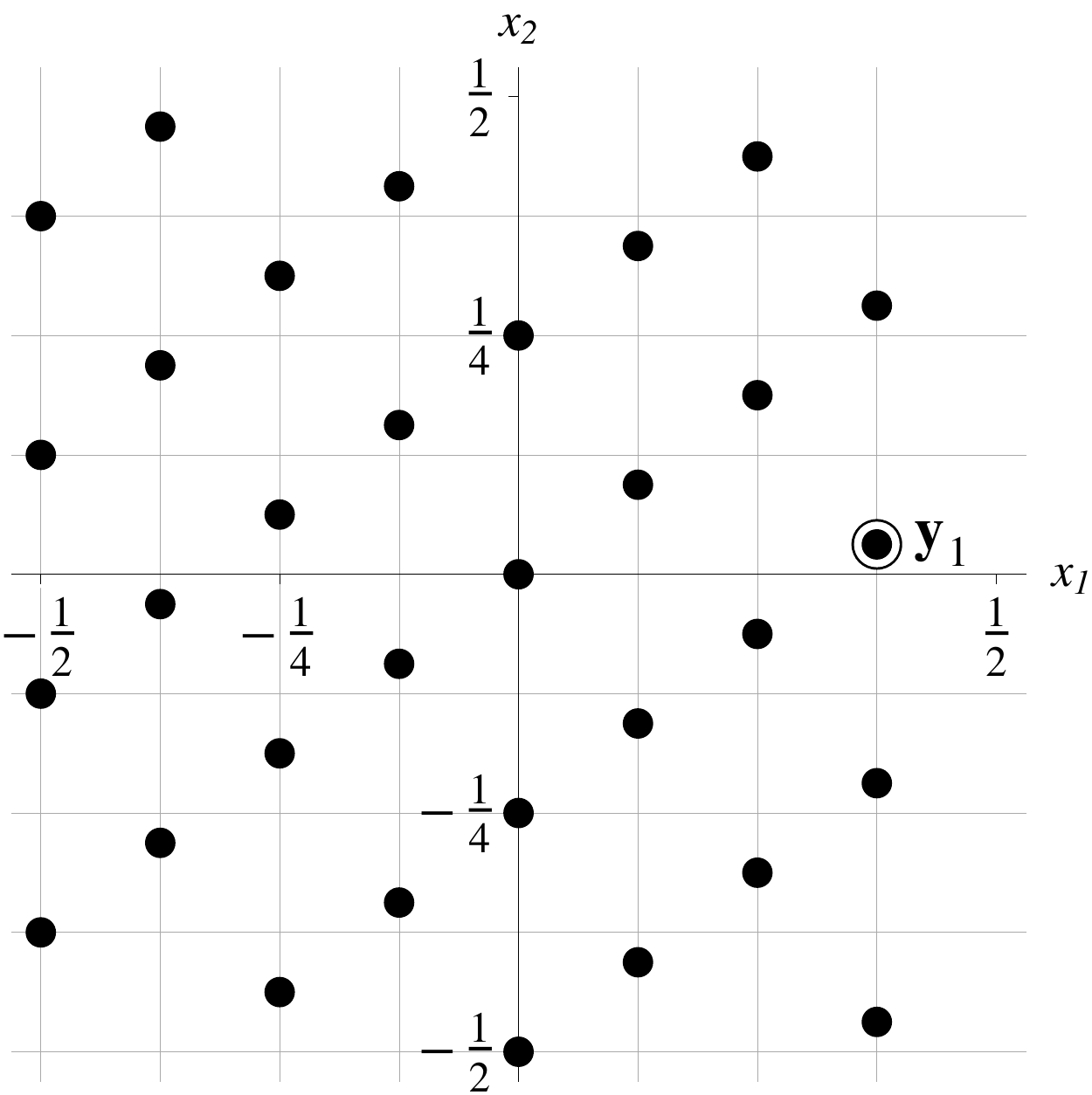}\hspace{.03\textwidth}
				\includegraphics[width=.47\textwidth]{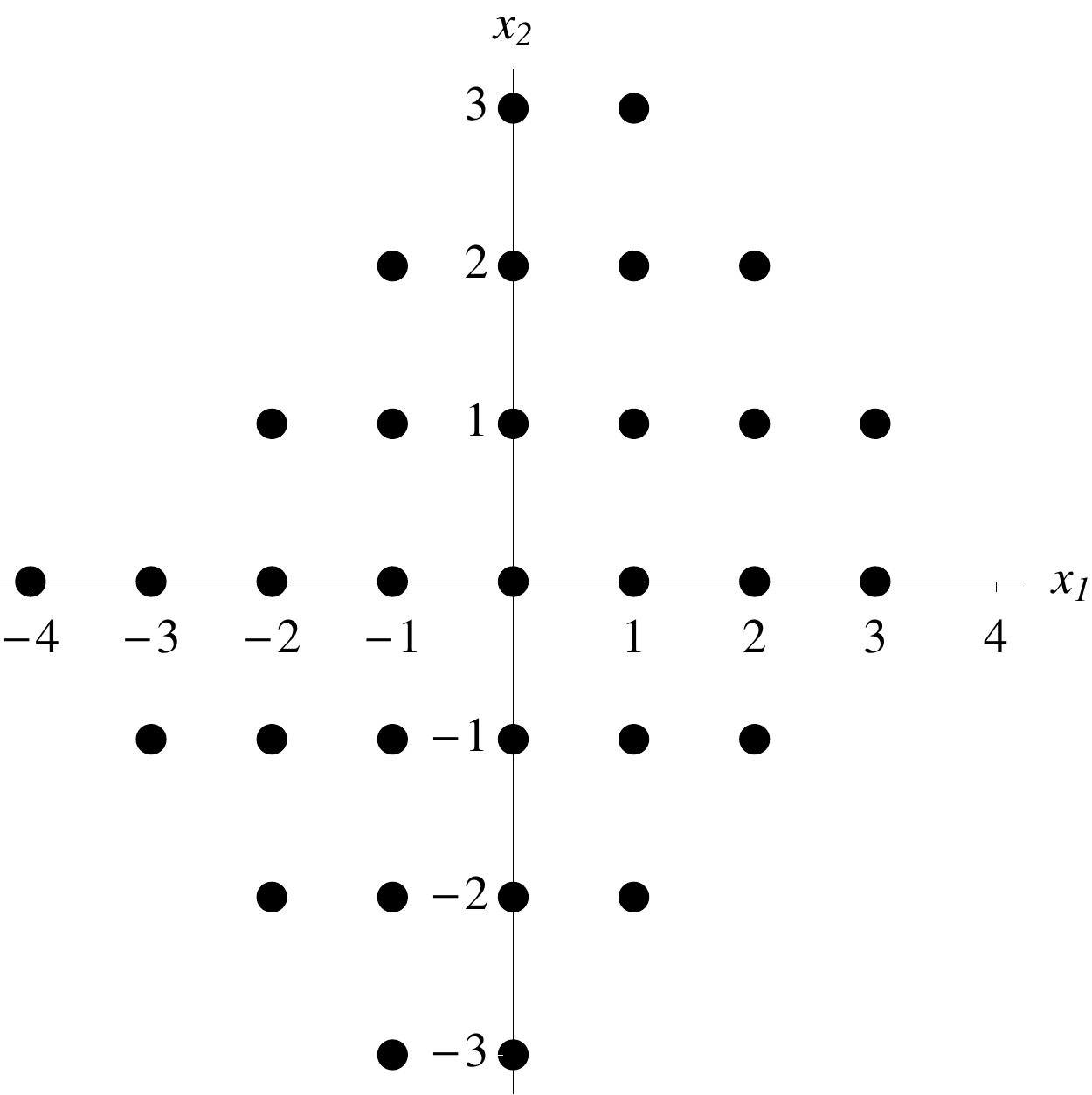}
				\caption{The pattern $\patSet{\mat{M}}$ for $\mat{M}$ given in~\eqref{eq:lattice-example} is illustrated on the left, where the set of congruence classes is chosen from \([-\tfrac{1}{2},\tfrac{1}{2})^2\). The emphazised point $\vect{y}_1 = \bigl(\tfrac{3}{8},\tfrac{1}{32}\bigr)^T$ is the canoncial basis vector of the pattern from Section~\ref{sec:bases_for_the_pattern_and_the_generating_group}. Its corresponding generating group $\gGroup{\mat{M}^T}$ (right) illustrates the set of frequencies obtained when performing a Fourier transform on the points of $\patSet{\mat{M}}$ using the matrix from Eq.~\eqref{eq:Fouriermatrix}.}
				\label{fig:Pattern-Example}
			\end{figure}
			The congruence class representants are chosen from $[-\tfrac{1}{2},\tfrac{1}{2})^2$. They can also be seen as sampling points on the 1-periodic torus. The lattice $\lattice{\mat{M}}$ consists of all integer shifts of these points. The corresponding generating group $\gGroup{\mat{M}^T}$, used to construct the Fourier matrix \eqref{eq:Fouriermatrix}, is given in Fig.~\ref{fig:Pattern-Example} on the right. They are obtained by taking all integer valued vectors from $\mat{M}^T[-\tfrac{1}{2},\tfrac{1}{2})^d$. The Smith normal form is given as 
			\begin{equation*}
				\mat{M} = \begin{pmatrix}
					4 &-3\\ 4&5 
				\end{pmatrix}
				= \begin{pmatrix}3&1\\5&2\end{pmatrix}
				\begin{pmatrix}1&0\\0&32\end{pmatrix}
				\begin{pmatrix}1&-12\\0&1\end{pmatrix}=\mat{Q}\mat{E}\mat{R}\text{,}
			\end{equation*}
			hence both groups are isomorphic to $\mathcal C_{32}$.
		\end{ex}
		%
		%
	%
	%
	%
	%
	\section{Bases for the pattern and the generating group} 
	\label{sec:bases_for_the_pattern_and_the_generating_group}
	In contrast to the one-dimensional pattern, i.e. the set $\{0,1/N,\ldots,(N-1)/N\}$, the ordering of the elements in $\patSet{\mat{M}}$ and $\gGroup{\mat{M}^T}$ is not given by any “natural” topology. Due to the isomorphisms in~\eqref{eq:group-isomorphisms} this section deals with finding an ordering, that is similar to the tensor product case by introducing a basis for $\patSet{\mat{M}}$. 
	Multiplying all the formulae in the following section with $\mat{M}$ leads to the same properties for the generating group $\gGroup{\mat{M}}$, hence starting with $\patSet{\mat{M}^T}$ also a basis for $\gGroup{\mat{M}^T}$ can be constructed.

	For a fixed regular matrix $\mat{M}$ we define the set of vectors
	\begin{equation}\label{eq:basisPM}
		\vect{y}_j :=
		\mat{R}^{-1}\frac{1}{\epsilon_{d-d_{\mat{M}}+j}}\vect{e}_{d-d_{\mat{M}}+j}, \quad j=1,\ldots,d_{\mat{M}}
		\text{,}
	\end{equation}
	where $\mat{R}$ denotes the left basis transform in the Smith normal form~\eqref{eq:SNF} and $\vect{e}_j$ denotes the $j$-th unit vector. These vectors are linear independent, because $\mat{R}$ has full rank. Using $\mat{R}$ as a change of basis, we see from~\citep[Lemma 2.4]{LangemannPrestin2010} that $\lattice{\mat{M}} = \lattice{\mat{E}\mat{R}}$ and
	\begin{equation*}
	\lattice{\mat{M}}=\lattice{\mat{E}\mat{R}}=\{\vect{y}\,|\,\mat{E}\mat{R}\vect{y}\in\mathbb Z^{d}\}\cong\lattice{\mat{E}}=\{\vect{x}\,|\,\mat{E}\vect{x}\in\mathbb Z^{d}\}\text{, }
	\end{equation*}
	where $f: \lattice{\mat{E}\mat{R}} \to \lattice{\mat{E}}, f(\vect{y}) = \mat{R}\vect{y}$ is an isomorphism between $\lattice{\mat{M}}$ and $\lattice{\mat{E}}$.

	The scaled unit vectors $\epsilon_{d-d_{\mat{M}}+j}^{-1}\vect{e}_{d-d_{\mat{M}}+j},\quad j=1,\ldots,d_{\mat{M}}$, form a basis for $\patSet{\mat{E}}$, i.e. there is a unique representation for each $\vect{x}\in\patSet{\mat{E}}$ of the form
	\begin{equation}\label{eq:linear-independence-expl}
		\vect{x}  = \calcMod{
		\sum_{j=1}^{d_{\mat{M}}} \lambda_j\tfrac{1}{\epsilon_{d-d_{\mat{M}}+j}}\vect{e}_{d-d_{\mat{M}}+j}
		}{\patSet{\mat{E}}},\quad \text{ where } 0 \leq \lambda_j < \epsilon_{d-d_{\mat{M}}+j}
		\text{.}
	\end{equation}
	The other unit vectors are not scaled due to $j\leq d-d_{\mat{M}} \Leftrightarrow \epsilon_j = 1$, and hence vanish in such a summation with respect to the congruence classes, i.e. $\bmod \mat{I}$. By using the inverse of the isomorphism $f^{-1}: \lattice{\mat{E}} \to \lattice{\mat{E}\mat{R}}, f^{-1}(\vect{y}) = \mat{R}^{-1}\vect{y}$, the vectors $\{\vect{y}_1,\ldots,\vect{y}_{d_{\mat{M}}}\}$ form a basis of $\patSet{\mat{M}}$.

	\begin{rem}
		The lattice generated by $\{\vect{y}_1,\ldots,\vect{y}_{d_{\mat{M}}}\}$ is called rank-$d_{\mat{M}}$-lattice.  Further the ordering of the basis elements is crucial, because each vector $\vect{y}_j$ has to span a cycle of length $\epsilon_{d-d_{\mat{M}}+j}$ (with respect to $\calcMod{\cdot}{\patSet{\mat{M}}}$) in this notation. The value $d_{\mat{M}}$ is also called the dimension of the pattern $\patSet{\mat{M}}$.
	\end{rem}

	Using the basis we obtain an ordering of the elements of $\patSet{\mat{M}}$ by using the lexicographical ordering of
	\begin{equation*}
		\patSet{\mat{M}} = \left(
		\calcMod{\sum_{j=1}^{d_{\mat{M}}} \lambda_j\vect{y}_{j}}{\patSet{\mat{M}}}
		\right)_{(\lambda_1,\ldots,\lambda_{d_{\mat{M}}})=\vect{0}}^{\epsilon_{d-d_{\mat{M}}+1}-1,\ldots,\epsilon_d-1}
	\end{equation*}
	on the set $\mathbb E_{\mat{M}} = \{0,1,\ldots,\epsilon_{d-d_{\mat{M}}+1}\}\times\cdots\times\{0,1,\ldots,\epsilon_d\} $ of indices.

	\begin{ex}\label{ex:lattice2}
		For the matrix $\mat{M} = \bigl(\begin{smallmatrix}4&-3\\4&5\end{smallmatrix}\bigr)$ from Ex.~\ref{ex:lattice1} we see, that $d_{\mat{M}} = 1$ and the only basis vector is $\vect{y}_1 = \mat{R}^{-1}\vect{e}_2 = \tfrac{1}{32}\begin{pmatrix} 12&1\end{pmatrix}^T = \bigl(\tfrac{3}{8},\tfrac{1}{32}\bigr)^T$, which is emphasised in Fig.~\ref{fig:Pattern-Example} (left). Hence by writing $\patSet{\mat{M}} = \{ \calcMod{k\vect{y}_1}{\patSet{\mat{M}}}, k=0,1,\ldots,31\}$ we even get an ordinary ordering of the elements.
	\end{ex}
	Applying these ideas on the generating group, leads to a basis of $\gGroup{\mat{M}^T}$ denoted by
	\begin{equation}\label{eq:basisGM}
		\vect{h}_j := \mat{R}^T\vect{e}_{d-d_{\mat{M}}+j},\quad j=1,\ldots,d_{\mat{M}}\text{.}
	\end{equation}
	This leads to the biorthogonality of these two bases, which is shown in the following
	\begin{lem}\label{lem:basis-relation}
		Let $\mat{M}\in\mathbb Z^{d\times d}$ be regular. Then the bases of $\patSet{\mat{M}}$ and $\gGroup{\mat{M}^T}$ given in~\eqref{eq:basisPM} and~\eqref{eq:basisGM} are biorthogonal, more precisely
		\begin{equation}
			\forall i,j\in\{1,\ldots,d_{\mat{M}}\}:\quad
			\langle \vect{h}_{j},\vect{y}_{i}\rangle
			=
			\begin{cases}
				\frac{1}{\epsilon_{d-d_{\mat{M}}+i}}&\mbox{ if } i=j\text{,}\\
				0&\mbox{ else.}
			\end{cases}
		\end{equation}
	\end{lem}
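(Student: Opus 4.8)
The plan is to reduce the claim to a one-line matrix identity. I read $\langle\vect{h}_j,\vect{y}_i\rangle$ as the Euclidean pairing $\vect{h}_j^T\vect{y}_i$ on $\mathbb R^d$: this is the bilinear form appearing in the exponent of the Fourier matrix~\eqref{eq:Fouriermatrix}, and the one for which biorthogonality is the natural statement. Since both vectors are real there is no conjugation to track. The two families are given explicitly in~\eqref{eq:basisPM} and~\eqref{eq:basisGM}, without any modular reduction—the operator $\calcMod{\cdot}{\cdot}$ enters only when forming $\mathbb Z$-combinations of basis vectors, not in the definition of a single $\vect{y}_j$ or $\vect{h}_j$—so I may compute directly with these representatives.

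First I would substitute the two definitions and transpose the left factor, using $(\mat{R}^T\vect{e}_{d-d_{\mat{M}}+j})^T=\vect{e}_{d-d_{\mat{M}}+j}^T\mat{R}$. This yields
\begin{equation*}
\vect{h}_j^T\vect{y}_i
=\vect{e}_{d-d_{\mat{M}}+j}^T\,\mat{R}\,\mat{R}^{-1}\,\tfrac{1}{\epsilon_{d-d_{\mat{M}}+i}}\vect{e}_{d-d_{\mat{M}}+i}
=\tfrac{1}{\epsilon_{d-d_{\mat{M}}+i}}\,\vect{e}_{d-d_{\mat{M}}+j}^T\vect{e}_{d-d_{\mat{M}}+i}\text{,}
\end{equation*}
where the cancellation $\mat{R}\mat{R}^{-1}=\mat{I}$ is the only structural fact used, and the scalar $\epsilon_{d-d_{\mat{M}}+i}^{-1}$ survives because it sits on the pattern side of the pairing. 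The final step is orthonormality of the standard basis, $\vect{e}_{d-d_{\mat{M}}+j}^T\vect{e}_{d-d_{\mat{M}}+i}=\delta_{ij}$, which splits the expression into the two announced cases and finishes the proof.

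Honestly there is no serious obstacle here: the content of the lemma is precisely that $\vect{h}_j$ and $\vect{y}_i$ were defined as the $\mat{R}^T$- and $\mat{R}^{-1}$-images of the \emph{same} shifted unit vectors, so the change of basis and its inverse-transpose annihilate one another. The only points demanding a little care are the transpose bookkeeping for $\mat{R}$ and confirming that the relevant indices $d-d_{\mat{M}}+j$, for $j=1,\ldots,d_{\mat{M}}$, stay inside the block $\{d-d_{\mat{M}}+1,\ldots,d\}$ where the elementary divisors exceed $1$, so that the two bases are indexed over matching positions and the factors $1/\epsilon_{d-d_{\mat{M}}+i}$ are the correct ones.
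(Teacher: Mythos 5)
Your proof is correct and coincides with the paper's own argument: both substitute the definitions \eqref{eq:basisPM} and \eqref{eq:basisGM} into the Euclidean pairing, cancel $\mat{R}\mat{R}^{-1}=\mat{I}$, and conclude by orthonormality of the unit vectors $\vect{e}_{d-d_{\mat{M}}+j}$. No differences worth noting.
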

	\begin{proof}
		For arbitrary $i,j\in\{1,\ldots,d_{\mat{M}}\}$ it holds
		\begin{eqnarray*}
			\begin{split}
				\langle \vect{h}_j,\vect{y}_i\rangle
			&= \langle \mat{R}^T\vect{e}_{d-d_{\mat{M}}+j}, \mat{R}^{-1}\frac{1}{\epsilon_{d-d_{\mat{M}}+i}}\vect{e}_{d-d_{\mat{M}}+i} \rangle
			= \frac{1}{\epsilon_{d-d_{\mat{M}}+i}}\vect{e}_{d-d_{\mat{M}}+j}^T\vect{e}_{d-d_{\mat{M}}+i}\text{.}
			\end{split}
		\end{eqnarray*}
	\end{proof}
	%
	%
	%
	%
	\section{The fast Fourier transform on $\patSet{\mat{M}}$} 
	\label{sec:the_fast_fourier_transform_on_matm}
	This section is devoted to derive a fast Fourier transform on $\patSet{\mat{M}}$ using its characterisation obtained in the previous section. The idea generalises an algorithm described in~\citep{MeBrGu83,MeSp81}. It does not depend on finding a prime factor decomposition of $\mat{M}$ as mentioned in~\citep[Eq. (35)]{MeSp81}. It uses the Smith normal form similar to~\citep{MeBrGu83}, but our approach is able to omit the rearrangement steps.

	A more general approach for abelian groups and their characters can be found in~\citep{Auslander1995}. In contrast, our approach uses properties of the pattern and generating group to deduce a fast algorithm working on arrays.

	\subsection{Properties of the multivariate Fourier transform} 
	\label{sub:properties_of_the_multivariate_fourier_transform}
	Based on~\citep[Lemma 2.1]{LangemannPrestin2010}, there exist permutation matrices $\mat{P}_{\vect{h}}, \mat{P}_{\vect{y}}$ on $\gGroup{\mat{M}^T}$ and $\patSet{\mat{M}}$ respectively, such that
	\begin{equation}\label{eq:FourierKronecker}
		\mathcal F(\mat{M}) = \mat{P}_{\vect{h}}
		\left(\mathcal F_{\epsilon_1}\otimes \mathcal F_{\epsilon_2}\otimes\cdots\otimes\mathcal F_{\epsilon_d}\right)
		\mat{P}_{\vect{y}}\text{, }
	\end{equation}
	where 
	\begin{equation*}
		\mathcal F_{\epsilon} = \frac{1}{\sqrt{\epsilon}}\left(\euler^{-2 \pi \imag h\epsilon^{-1}g}\right)_{h,g=0}^{\epsilon-1},\quad \epsilon \in \mathbb N^+
	\end{equation*}
	denote the elementary divisors of the Smith normal form of $\mat{M}$. Using the bases constructed in Section~\ref{sec:bases_for_the_pattern_and_the_generating_group}, this section develops properties which simplify~\eqref{eq:FourierKronecker} to
	\begin{equation}\label{eq:FourierKronecker2}
		\mathcal F(\mat{M}) =
		\mathcal F_{\epsilon_{d-d_{\mat{M}}+1}}\otimes \mathcal F_{\epsilon_{d-d_{\mat{M}}+2}}
		\otimes\cdots\otimes \mathcal F_{\epsilon_d}\text{,}
	\end{equation}
	where the first factors $\mathcal F_{\epsilon_1},\ldots,\mathcal F_{\epsilon_{d-d_{\mat{M}}}}$ of the Kronecker product in~\eqref{eq:FourierKronecker} can be omitted due to $\mathcal F_{1} = \euler^0 = 1$. The following theorem characterises conditions for the ordering which are necessary for $\mathcal F(\mat{M})$ to fulfill~\eqref{eq:FourierKronecker2}. We start with the orderings  using the bases from~\eqref{eq:basisPM} and~\eqref{eq:basisGM} of $\patSet{\mat{M}}$ and $\gGroup{\mat{M}^T}$ from Section~\ref{sec:bases_for_the_pattern_and_the_generating_group} and look at their lexicographical orderings, i.e.
	\begin{equation}\label{eq:orderings}
		\begin{split}
		\patSet{\mat{M}} &
		= \left(\calcMod{\sum_{j=1}^{d_{\mat{M}}} \lambda_j\vect{y}_j}{\patSet{\mat{M}}}
		\right)_{
		\vect{\lambda}
		\in\mathbb E_{\mat{M}}}
		&\hspace{-1em}\text{ and }\hspace{1em}&
		\gGroup{\mat{M}^T} &
		= \left(\calcMod{\sum_{j=1}^{d_{\mat{M}}} \mu_j\vect{h}_j}{\gGroup{\mat{M}^T}}
		\right)_{
		\vect{\mu}\in\mathbb E_{\mat{M}}}\text{,}
		\end{split}
	\end{equation}
	where $\vect{\lambda} = (\lambda_1,\ldots,\lambda_{d_{\mat{M}}})$ and $\vect{\mu}=(\mu_1,\ldots,\mu_{d_{\mat{M}}})$.
	\begin{thm}[A basis for the Kronecker product]\label{thm:direcctKroneckerBasis}\ \\
	The orderings from~\eqref{eq:orderings} used for the construction of $\mathcal F(\mat{M})$ fulfill~\eqref{eq:FourierKronecker2}.
	\end{thm}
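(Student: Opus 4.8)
The plan is to prove~\eqref{eq:FourierKronecker2} entrywise. Both sides are $m\times m$ matrices, and with the orderings from~\eqref{eq:orderings} the rows of $\mathcal F(\mat{M})$ are indexed by the multi-indices $\vect{\mu}=(\mu_1,\ldots,\mu_{d_{\mat{M}}})\in\mathbb E_{\mat{M}}$ and its columns by $\vect{\lambda}=(\lambda_1,\ldots,\lambda_{d_{\mat{M}}})\in\mathbb E_{\mat{M}}$, each in lexicographical order. The right-hand side, being a Kronecker product, is by definition indexed by exactly the same multi-indices in lexicographical order, with $(\mathcal F_{\epsilon_{d-d_{\mat{M}}+1}}\otimes\cdots\otimes\mathcal F_{\epsilon_d})_{\vect{\mu},\vect{\lambda}}=\prod_{i=1}^{d_{\mat{M}}}\bigl(\mathcal F_{\epsilon_{d-d_{\mat{M}}+i}}\bigr)_{\mu_i,\lambda_i}$. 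Hence it suffices to match the two matrices coefficient by coefficient, and the lexicographical agreement of the index sets then promotes the coefficientwise identity to the matrix identity.

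Before computing, I would record that the factor $\euler^{-2\pi\imag\vect{h}^T\vect{y}}$ in~\eqref{eq:Fouriermatrix} depends only on the classes of $\vect{h}$ and $\vect{y}$ and not on their representatives. Replacing $\vect{y}$ by $\vect{y}+\vect{w}$ with $\vect{w}\in\mathbb Z^d$ alters the exponent by $\vect{h}^T\vect{w}\in\mathbb Z$, since $\gGroup{\mat{M}^T}\subset\mathbb Z^d$; replacing $\vect{h}$ by $\vect{h}+\mat{M}^T\vect{z}$ alters it by $\vect{z}^T\mat{M}\vect{y}\in\mathbb Z$, since $\vect{y}\in\lattice{\mat{M}}$ forces $\mat{M}\vect{y}\in\mathbb Z^d$. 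This lets me evaluate the $(\vect{\mu},\vect{\lambda})$-coefficient using the \emph{unreduced} representatives $\vect{h}=\sum_{j=1}^{d_{\mat{M}}}\mu_j\vect{h}_j$ and $\vect{y}=\sum_{i=1}^{d_{\mat{M}}}\lambda_i\vect{y}_i$ instead of the reductions $\calcMod{\cdot}{\gGroup{\mat{M}^T}}$, $\calcMod{\cdot}{\patSet{\mat{M}}}$ appearing in~\eqref{eq:orderings}; this is the step that makes the factorisation possible.

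The core of the argument is then to expand the real bilinear exponent and invoke the biorthogonality of Lemma~\ref{lem:basis-relation}:
\begin{equation*}
\vect{h}^T\vect{y}=\sum_{i,j=1}^{d_{\mat{M}}}\mu_j\lambda_i\langle\vect{h}_j,\vect{y}_i\rangle=\sum_{i=1}^{d_{\mat{M}}}\frac{\mu_i\lambda_i}{\epsilon_{d-d_{\mat{M}}+i}}\text{,}
\end{equation*}
where every off-diagonal term vanishes by Lemma~\ref{lem:basis-relation}. Because the exponential of a sum factorises, this gives
\begin{equation*}
\frac{1}{\sqrt m}\euler^{-2\pi\imag\vect{h}^T\vect{y}}=\frac{1}{\sqrt m}\prod_{i=1}^{d_{\mat{M}}}\euler^{-2\pi\imag\mu_i\lambda_i/\epsilon_{d-d_{\mat{M}}+i}}\text{,}
\end{equation*}
and each factor $\euler^{-2\pi\imag\mu_i\lambda_i/\epsilon_{d-d_{\mat{M}}+i}}$ is precisely $\sqrt{\epsilon_{d-d_{\mat{M}}+i}}$ times the coefficient $\bigl(\mathcal F_{\epsilon_{d-d_{\mat{M}}+i}}\bigr)_{\mu_i,\lambda_i}$, the ranges $0\le\mu_i,\lambda_i<\epsilon_{d-d_{\mat{M}}+i}$ matching those of $\mathcal F_{\epsilon_{d-d_{\mat{M}}+i}}$.

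It remains to reconcile the normalisation, which I expect to be the only genuinely delicate point. The collected prefactors of the Kronecker product are $\prod_{i=1}^{d_{\mat{M}}}\epsilon_{d-d_{\mat{M}}+i}^{-1/2}=\bigl(\prod_{i=1}^{d_{\mat{M}}}\epsilon_{d-d_{\mat{M}}+i}\bigr)^{-1/2}$, and since $\epsilon_j=1$ for $j\le d-d_{\mat{M}}$ the product over the nontrivial divisors equals $\epsilon_1\cdots\epsilon_d=|\det\mat{M}|=m$. Thus this prefactor is $m^{-1/2}$, which is exactly the global constant in~\eqref{eq:Fouriermatrix}, so the coefficients agree for all $\vect{\mu},\vect{\lambda}\in\mathbb E_{\mat{M}}$. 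The computation itself is routine; the care lies entirely in the representative-independence that licenses using $\sum_j\mu_j\vect{h}_j$ and $\sum_i\lambda_i\vect{y}_i$ in the exponent, and in checking that the Kronecker product's index convention is the same lexicographical ordering of $\mathbb E_{\mat{M}}$ used in~\eqref{eq:orderings}.
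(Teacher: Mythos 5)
Your proposal is correct, but it is organised differently from the paper's proof. The paper argues by induction over $d_{\mat{M}}$: fixing $\lambda_1,\ldots,\lambda_{d_{\mat{M}}-1}$ and $\mu_1,\ldots,\mu_{d_{\mat{M}}-1}$, it shows that the resulting $\epsilon_d\times\epsilon_d$ submatrix of $\mathcal F(\mat{M})$ equals $c\,\mathcal F_{\epsilon_d}$, where $c$ is the corresponding entry of the product of the first $d_{\mat{M}}-1$ Kronecker factors, and then applies the induction hypothesis to those factors. You instead verify the identity entrywise in closed form: expand the bilinear exponent $\vect{h}^T\vect{y}$ over both bases, let Lemma~\ref{lem:basis-relation} annihilate the off-diagonal terms, factor the exponential, and match normalisations via $\epsilon_1\cdots\epsilon_d=m$ together with $\epsilon_j=1$ for $j\le d-d_{\mat{M}}$. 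In substance this is the paper's induction fully unwound, but each version buys something. Yours makes explicit the representative-independence of $\euler^{-2\pi\imag\vect{h}^T\vect{y}}$ --- that reduction modulo $\mat{I}$ on the pattern side and modulo $\mat{M}^T$ on the group side only shifts the exponent by integers --- which is exactly the step the paper uses silently when it replaces $\vect{z}_k^T\vect{x}_l$ by the unreduced sums in the first equality of~\eqref{eq:FormSubmatrix}; your argument therefore fills a small gap in the paper's exposition, and it also checks the index conventions of the Kronecker product and treats the normalisation globally rather than spreading it over induction steps (the paper's constant $c$ carries the factor $1/\sqrt{\epsilon_{d-d_{\mat{M}}+1}\cdots\epsilon_{d-1}}$ along the recursion). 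What the paper's induction buys is structural: the submatrix decomposition it exhibits is precisely the blockwise splitting exploited by the fast transform in~\eqref{eq:KroneckerFFT} and by the complexity analysis of Theorem~\ref{thm:FFTonTd}, so the inductive proof doubles as a blueprint for the Cooley--Tukey recursion, which a purely entrywise verification does not provide.
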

	\begin{proof}
		Given any $j \in \{d-d_{\mat{M}}+1,\ldots,d\}$, in particular $\epsilon_j > 1$, orderings in~\eqref{eq:orderings}
		and Lemma~\ref{lem:basis-relation}
		are used to prove validity of~\eqref{eq:FourierKronecker2} by induction over $d_{\mat{M}}$. For $d_{\mat{M}}=1$ the matrix $\mathcal F(\mat{M})$ is identical to the one-dimensional case, hence Lemma~\ref{lem:basis-relation} implies~\eqref{eq:FourierKronecker2} 
		. 

		For $d_{\mat{M}} > 1$ let $\lambda_1,\ldots,\lambda_{d_{\mat{M}}-1}$ and $\mu_1,\ldots,\mu_{d_{\mat{M}}-1}$ be given, each fulfilling $0\leq \lambda_j,\mu_j < \epsilon_{d-d_{\mat{M}}+j},\quad j=1,\ldots,d_{\mat{M}}$, and denote	
		\begin{equation*}
			c = \frac{1}{\sqrt{\epsilon_{d-d_{\mat{M}}+1}\cdot\ldots\cdot\epsilon_{d-1}}}\exp\left(- 2 \pi \imag \sum\limits_{j=1}^{d_{\mat{M}}-1}\lambda_j\epsilon_{d-d_{\mat{M}}+j}^{-1}\mu_j\right)\text{.}
		\end{equation*}
		Then the submatrix of $\mathcal F(\mat{M})$ induced by the elements
		\begin{align*}
			\vect{x}_l  = \calcMod{\sum_{j=1}^{d_{\mat{M}}} \lambda_j\vect{y}_j}%
			{\patSet{\mat{M}}}
			,\quad l=\lambda_{d_{\mat{M}}} = 0,\ldots,\epsilon_d-1\\
			\intertext{and}
			\vect{z}_k = \calcMod{\sum_{j=1}^{d_{\mat{M}}} \mu_j\vect{h}_j}%
			{\gGroup{\mat{M}^T}}
			,\quad k=\mu_{d_{\mat{M}}} = 0,\ldots,\epsilon_d-1
		\end{align*}
		can be written as
		\begin{equation}\label{eq:FormSubmatrix}
			\begin{split}
			\frac{1}{\sqrt{m}}\left(\euler^{-2 \pi \imag \vect{z}_k^T\vect{x}_l} \right)_{k,l=0}^{\epsilon_d-1}
			&= \frac{1}{\sqrt{m}}\left(
			\euler^{-2 \pi \imag \left(\sum\limits_{j=1}^{d_{\mat{M}}} \mu_j\vect{h}_j\right)^T\left(\sum\limits_{j=1}^{d_{\mat{M}}} \lambda_j\vect{y}_j\right)
			}
			 \right)_{k,l=0}^{\epsilon_d-1}\\
			&= \frac{c}{\sqrt{\epsilon_d}}
			\left(\euler^{-2 \pi \imag k\vect{h}^T_jl\vect{y}_j}\right)_{k,l=0}^{\epsilon_d-1}
			=
			c \mathcal F_{\epsilon_d}.
		\end{split}
		\end{equation}
		Hence, this submatrix of $\mathcal F(\mat{M})$, generated by arbitrary elements with fixed
		\\
		$\lambda_1, \ldots, \lambda_{d_{\mat{M}}-1}, \mu_1, \ldots, \mu_{d_{\mat{M}}-1}$ is $\mathcal F_{\epsilon_{d}}$. This is the last factor of the Kronecker product in~\eqref{eq:FourierKronecker2}, where $c$ represents one element of the complete previous product. This previous product up to the last but one factor is already fulfilling the form of~\eqref{eq:FourierKronecker2} by the  induction hypothesis. So Lemma~\ref{lem:basis-relation} implies~\eqref{eq:FourierKronecker2}.
	\end{proof}
	\begin{rem}\label{rem:basis-generalization}
		For a slightly loosened version of Lemma~\ref{lem:basis-relation}, i.e. that $\langle \vect{h}_i,\vect{y}_i\rangle \equiv \epsilon_{d-d_{\mat{M}}+i}^{-1}\bmod1$, Theorem~\ref{thm:direcctKroneckerBasis} holds for any pair of biorthogonal bases for $\patSet{\mat{M}}$ and $\gGroup{\mat{M}^T}$ and even the reverse implication is true: If $\mathcal F(\mat{M})$ is of the form~\eqref{eq:FourierKronecker2}, then there exist two bases fulfilling the slightly loosened version of Lemma~\ref{lem:basis-relation} and provide an ordering for the matrix columns and rows as denoted in~\eqref{eq:orderings}. The proof is just the reverse steps of the proof of Theorem~\ref{thm:direcctKroneckerBasis}. Hence a pair of biorthogonal bases for $\patSet{\mat{M}}$ and $\gGroup{\mat{M}}$ fulfilling the modified Lemma~\ref{lem:basis-relation} is necessary and sufficient for the Fourier matrix to fulfill~\eqref{eq:FourierKronecker2}.
	\end{rem}

	\subsection{Fast Fourier transform} 
	\label{sub:fast_fourier_transform}
	Following the approach of Mersereau et. al.~\citep{MeBrGu83} or in a more general matter also described by Auslander et al.~\citep{AuJoJo96}, we can now apply fast Fourier transformation techniques by adapting the multivariate Cooley-Tukey-Algorithm. In addition to the latter general approach we present a complete algorithm using any biorthogonal bases for the pattern $\patSet{\mat{M}}$ and its dual group $\gGroup{\mat{M}^T}$ from Lemma~\ref{lem:basis-relation}, cf. also Remark \ref{rem:basis-generalization}, and analyse the complexity of the algorithm. We also avoid the reindexing mentioned by the former authors using the presented basis and their coefficient vectors to arrange the vectors in the Fourier transform.

	Given a basis $\{\vect{y}_1,\ldots,\vect{y}_{d_{\mat{M}}}\}$ of $\patSet{\mat{M}}$, we can decompose every $\vect{y}\in\patSet{\mat{M}}$ u\-nique\-ly, i.e.
	\begin{equation*}
	\exists!\,\vect{\lambda}\in\mathbb E_{\mat{M}}\,:\, \vect{y} = \calcMod{\sum_{k=1}^{d_{\mat{M}}}\lambda_k\vect{y}_k}{\patSet{\mat{M}}}\text{.}
	\end{equation*}
	Using this decomposition every vector $\vect{b} = \left(b_{\vect{y}}\right)_{\vect{y}\in\patSet{\mat{M}}}$ can also be addressed with $\vect{b} = \left(b_{\lambda}\right)_{\lambda \in \mathbb E_{\mat{M}}}$. Let $\mathcal G := \mathcal F_{\epsilon_{d-d_{\mat{M}}+1}}\otimes\mathcal F_{\epsilon_{d-d_{\mat{M}}+2}}\otimes\cdots\otimes\mathcal F_{\epsilon_{d-1}}\in\mathbb C^{n\times n},\quad n=\tfrac{m}{\epsilon_d}$ denote the Fourier transform with respect to the basis vectors $\vect{y}_1,\ldots,\vect{y}_{d_{\mat{M}}-1}$. Using the indexing with respect to the position inside the cycles, the Fourier transform reads
	\begin{equation}
		\begin{split}
		\vect{\hat b} &= \left(\mathcal G \otimes \mathcal F_{\epsilon_d}\right)\vect{b}\\
		&= 
		\begin{pmatrix}
			&&&\\ 
			\mathcal G_{1,1}\mathcal F_{\epsilon_d} & \mathcal G_{1,2}\mathcal F_{\epsilon_d} & \cdots & \mathcal G_{1,n}\mathcal F_{\epsilon_d}\\
			&&&\\ 
			&&&\\ 
			\mathcal G_{2,1}\mathcal F_{\epsilon_d} & \mathcal G_{2,2}\mathcal F_{\epsilon_d} & \cdots & \mathcal G_{2,n}\mathcal F_{\epsilon_d}\\
			&&&\\ 
			\vdots			&				& 		& 	\vdots \\
			\mathcal G_{n,1}\mathcal F_{\epsilon_d} & \mathcal G_{n,2}\mathcal F_{\epsilon_d} & \cdots & \mathcal G_{n,n}\mathcal F_{\epsilon_d}\\
			&&&\  
		\end{pmatrix}
		\begin{pmatrix}
			b_{(0,0,\ldots,0)} \\ \vdots \\ b_{(0,0,\ldots,\epsilon_d-1)} \\
			b_{(0,\ldots,0,1,0)} \\ \vdots \\ b_{(0,\ldots,0,1,\epsilon_d-1)} \\ b_{(0,\ldots,0,2,0)}\\\vdots\\b_{(\epsilon_1-1,\epsilon_2-1,\ldots,\epsilon_d-1)}
		\end{pmatrix}\text{.}
		\end{split}
		\label{eq:KroneckerFFT}
	\end{equation}
	This enables us to split the computation into two parts:
	\\
	First calculating $\mathcal F_{\epsilon_d}
	\left(b_{(\lambda_1,\ldots,\lambda_{d_{\mat{M}}})}\right)_{\lambda_{d_{\mat{M}}}=0}^{\epsilon_d-1}$ for fixed values of $\lambda_1,\ldots,\lambda_{d_{\mat{M}}-1}$. Following this blockwise Fourier transform, an interleaved addressing due to fixed $\epsilon_{d_{\mat{M}}}$ is used to compute the multiplications with $\mathcal G$. This leads to an implementation denoted in Algorithm~\ref{alg:fourier}.

	\begin{lstlisting}[label=alg:fourier, caption={A fast Fourier transform on $\patSet{\mat{M}}$ in \emph{Mathematica} notation, where \lstinline!Fourier! denotes any implementation of the one-dimensional Fourier transform.}, float,texcl, language=Mathematica, floatplacement=!tb]
		FourierOnPattern[$\epsilon$_, b_] := Block[{hatb},
		% $\epsilon = (\epsilon_i)_{i=1}^{d_{\mat{M}}}$: vector containig the elementary divisors of $\mat{M}$
		% b: vector of input values given as  $\vect{b} = \left(b_{\lambda}\right)_{\lambda \in \mathbb E_{\mat{M}}}$
		%
		% hatb: The Fourier transform $\hat{\vect{b}} = \mathcal F(M)\vect{b}$
			If [Length[$\epsilon$] == 1, Return[Fourier[b]];
			% Perform the transforms on blocks of size $\epsilon_d$
			Do [
				hatb[[{$\mu_1,\ldots,\mu_{d_{\mat{M}-1}}$},All]] = Fourier[b[[{$\mu_1,\ldots,\mu_{d_{\mat{M}-1}}$},All]]];
				,   {$\mu_1$,1,$\epsilon_1$},$\ldots$,{$\mu_{d_{\mat{M}}}$,1,$\epsilon_{d_{\mat{M}}-1}$}
				];
			% Perform transform on the first $d_{\mat{M}-1}$ cycles
			% recursively on interleaved blocks
			Do [
				% sec denotes $d_{\mat{M}}-1$ times the term All
				hatb[[sec,$\xi$]] = FourierOnPattern[{$\epsilon_1,\ldots,\epsilon_{d_{\mat{M}-1}}$},hatb[[sec,$\xi$]]];
				,   {$\xi$,1,$\epsilon_{d_{\mat{M}}}$}
				];
		Return[hatb];
		];
	\end{lstlisting}
	\begin{thm}[computational complexity of the FFT on $\mathbb T^d$]\label{thm:FFTonTd}\ \\
	Let $\mat{M}\in\mathbb Z^{d\times d},\ m=|\det\mat{M}|>0$ be given. The Fourier transform $\vect{\hat b} = \mathcal F(\mat{M})\vect{b}$ using~\eqref{eq:KroneckerFFT} can be computed with $O(m\log m)$ operations.
	\end{thm}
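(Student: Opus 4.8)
The plan is to exploit the twiddle-factor-free Kronecker factorisation~\eqref{eq:FourierKronecker2} established in Theorem~\ref{thm:direcctKroneckerBasis} and to read off from Algorithm~\ref{alg:fourier} a cost recurrence that splits a transform of size $m$ into one layer of short transforms plus several transforms of size $n=m/\epsilon_d$. Throughout I would write $C(\epsilon)$ for the arithmetic cost of the one-dimensional routine \texttt{Fourier} of length $\epsilon$ and assume, as usual, that it is a genuinely fast transform with $C(\epsilon)=O(\epsilon\log\epsilon)$; for prime $\epsilon$ this is guaranteed by e.g. Rader's or Bluestein's algorithm. The crucial structural input is that, unlike a generic Cooley--Tukey step, \eqref{eq:FourierKronecker2} carries no intermediate twiddle-factor multiplications, so each Kronecker factor may be applied purely blockwise.

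First I would justify the recurrence. The block form~\eqref{eq:KroneckerFFT} exhibits $\mathcal F(\mat{M})=\mathcal G\otimes\mathcal F_{\epsilon_d}$ acting in two stages. The first \texttt{Do}-loop of Algorithm~\ref{alg:fourier} applies $\mathcal F_{\epsilon_d}$ to each of the $n=m/\epsilon_d$ blocks indexed by fixed $(\lambda_1,\ldots,\lambda_{d_{\mat{M}}-1})$, costing $\tfrac{m}{\epsilon_d}C(\epsilon_d)$. The second loop then performs, for each of the $\epsilon_d$ interleaved slices, one recursive call applying $\mathcal G=\mathcal F_{\epsilon_{d-d_{\mat{M}}+1}}\otimes\cdots\otimes\mathcal F_{\epsilon_{d-1}}$, a transform of size $n$. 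Because the factorisation contains no twiddle factors, no arithmetic is incurred between the two stages beyond the index-only reinterleaving, so writing $T(m)$ for the total cost yields
\begin{equation*}
    T(m) = \frac{m}{\epsilon_d}C(\epsilon_d) + \epsilon_d\,T\!\left(\frac{m}{\epsilon_d}\right),\qquad T(\epsilon)=C(\epsilon),
\end{equation*}
the base case being the branch reached once a single nontrivial divisor remains.

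Next I would unroll this recurrence over its $d_{\mat{M}}$ levels. Peeling off one factor at a time and using identities such as $\epsilon_d\cdot\tfrac{m}{\epsilon_d\epsilon_{d-1}}=\tfrac{m}{\epsilon_{d-1}}$, the accumulated prefactors telescope and give the closed form
\begin{equation*}
    T(m) = \sum_{i=1}^{d_{\mat{M}}} \frac{m}{\epsilon_{d-d_{\mat{M}}+i}}\,C\!\left(\epsilon_{d-d_{\mat{M}}+i}\right).
\end{equation*}
Substituting $C(\epsilon)=O(\epsilon\log\epsilon)$, the size factor cancels in each summand, leaving a contribution $O(m\log\epsilon_{d-d_{\mat{M}}+i})$; summing and using $\sum_i\log\epsilon_{d-d_{\mat{M}}+i}=\log\prod_i\epsilon_{d-d_{\mat{M}}+i}=\log m$ then gives $T(m)=O(m\log m)$.

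The main obstacle, and the step deserving genuine care rather than routine bookkeeping, is the justification of the recurrence itself: one must verify that the interleaved addressing in Algorithm~\ref{alg:fourier} exactly realises the block pattern of~\eqref{eq:KroneckerFFT}, and in particular that the pure Kronecker structure of Theorem~\ref{thm:direcctKroneckerBasis} really does eliminate every twiddle-factor multiplication. Otherwise an extra $O(m)$ term per level would appear; this would in fact be harmless here, since the recursion depth is $d_{\mat{M}}\le\log_2 m$ because each nontrivial $\epsilon\ge 2$, but it should be addressed explicitly. A secondary point worth stating is the standing assumption that \texttt{Fourier} attains $O(\epsilon\log\epsilon)$ for every length $\epsilon$, including primes, without which the stated bound need not hold.
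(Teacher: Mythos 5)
Your proof is correct and takes essentially the same route as the paper: the paper's proof is an induction over $d_{\mat{M}}$ that splits the transform into $n=m/\epsilon_d$ blockwise transforms $\mathcal F_{\epsilon_d}$ followed by $\epsilon_d$ interleaved transforms of size $n$ (costed by the induction hypothesis), which is exactly your recurrence $T(m)=\tfrac{m}{\epsilon_d}C(\epsilon_d)+\epsilon_d\,T(m/\epsilon_d)$, your unrolled sum being just the closed form of that induction. Your additional remarks---that the pure Kronecker structure carries no twiddle factors, that prime lengths require e.g. Rader or Bluestein to guarantee $C(\epsilon)=O(\epsilon\log\epsilon)$, and that the per-level $O(m)$ terms accumulate harmlessly over $d_{\mat{M}}\le\log_2 m$ levels---are points the paper leaves implicit, but they refine rather than alter the argument.
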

	\begin{proof}
		Let any implementation of the one-dimensional FFT be given, i.e. with computational costs $c_{\text{FFT}}k\log k + O(k)$ for an input of $k$ coefficients, where e.g. $c_{\text{FFT}} = \frac{34}{9}$ as shown in~\citep{JoFr06}. Then the proof is again by induction over $d_{\mat{M}}$:
		The first step is a blockwise computation of $\mathcal F_{\epsilon_d}
		\left(b_{(\lambda_1,\ldots,\lambda_{d_{\mat{M}}})}\right)_{\lambda_{d_{\mat{M}}}=0}^{\epsilon_d-1}$ for fixed values of $\lambda_1,\ldots,\lambda_{d_{\mat{M}}-1}$. These are $n := \epsilon_1\cdot\ldots\cdot\epsilon_{d-1}$ Fourier transforms of computation cost $c_{\text{FFT}}\epsilon_d\log\epsilon_d + O(\epsilon_d)$ each. After that, for each fixed $0\leq \lambda_{d_{\mat{M}}} < \epsilon_d$  we have to compute the Fourier transform using the Matrix $\mathcal G$, which are $\epsilon_d$ transforms, where each of them needs $c_{\text{FFT}}n\log n + O(n)$ time by induction hypothesis. This leads to 
		\begin{equation}\label{eq:fft-complexity}
			\begin{split}
				&n\left(c_{\text{FFT}}\epsilon_d\log\epsilon_d + O(\epsilon_d)\right)
				+ \epsilon_d\left(c_{\text{FFT}}n\log n + O(n)\right)\\
				=\ &
				c_{\text{FFT}}n\epsilon_d\log\epsilon_d + O(m)
				+ c_{\text{FFT}}n\epsilon_d\log n + O(m)\\
				=\ &
				c_{\text{FFT}}m\left( \log\epsilon_d + \log n\right) + O(m)
				< 
				c_{\text{FFT}}m\log m + O(m)\text{.}
				\end{split}			
		\end{equation}
	\end{proof}
	The proof also reveals the optimisation possibilities of the multivariate Fourier transform in comparison to the one-dimensional case with the same size $m$ of points: The first step consists of $n$ Fourier transforms of the same size $\epsilon_d$, where each transform acts on a distinct (blockwise) subset of the given data. This means one can use a vectorial SIMD implementation~\citep{Franchetti2002} and compute this term in a parallel implementation in $O(\epsilon_d\log\epsilon_d)$. The same holds for the $\epsilon_d$ Fourier transforms in the second step, which act on distinct (interleaved) subsets on the immediate result, which is in $O(n\log n)$, using again a vectorial parallel implementation.
	%
	%
	%
	%
	%
	\section{Fast wavelet transform} 
	\label{sec:fast_wavelet_transform}
	The same techniques presented in Section~\ref{sec:the_fast_fourier_transform_on_matm} can also be applied to the corresponding periodic wavelet transform. We first introduce subspaces of $L^2(\mathbb T^d)$ that are constructed using translates of one function and decompose these into $j \geq 2$ subspaces. We present some properties from~\citep{LangemannPrestin2010} concerning these spaces and derive a fast decomposition. We also generalise some properties from the one-dimensional case in~\citep{Se98}, e.g. the scaling property, that is extended by directional information.

	For convenience, we leave out the shift $\calcMod{\cdot}{X}$ back into the set of congruence class representants in the summations on $X=\patSet{\mat{M}}$
	 and $X=\gGroup{\mat{M}^T}$, whenever it is clear from context.

	\subsection{Translation invariant spaces}
	A subspace $L \subset L^2(\mathbb T^d)$ is called 
	$\mat{M}$-invariant 
	if
	\begin{equation*}
		\forall \vect{y}\in\patSet{\mat{M}}: f \in L \Rightarrow T(\vect{y})f \in L\text{.}
	\end{equation*}
	Such a subspace can easily be constructed given a function $f \in L^2(\mathbb T^d)$. We define
	\begin{equation*}
		V_{\mat{M}}^{f} = \text{span}\{T(\vect{y})f\,:\,\vect{y} \in \lattice{\mat{M}}\} = \text{span}\{T(\vect{y})f\,:\,\vect{y} \in \patSet{\mat{M}}\}
	\end{equation*}
	as the span of all translates of $f$ (with respect to a matrix $\mat{M}$).

	Then for two functions $f,g\in L^2(\mathbb T^d)$ it holds~\citep[Theorem 3.3]{LangemannPrestin2010} that $g\in V_{\mat{M}}^f$ if and only if there exists a vector $\vect{a}  = (a_{\vect{y}})_{\vect{y}\in\patSet{\mat{M}}}$ with its Fourier Transform $\vect{\hat a} = (\hat a _{\vect{h}})_{\vect{h}\in\gGroup{\mat{M}^T}}\\ = \mathcal F(\mat{M})\vect{a}$ such that
	\begin{equation}\label{eq:subspaces-coeffs}
		\forall\,\vect{h} \in \gGroup{\mat{M}^T}\ \forall\,\vect{k}\in\mathbb Z^d : c_{\vect{h}+\mat{M}^T\vect{k}}(g) = \hat a_{\vect{h}}c_{\vect{h}+\mat{M}^T\vect{k}}(f)\text{.}
	\end{equation}
	Then 
	\begin{equation*}
		g = \sum_{\vect{y}\in\patSet{\mat{M}}} a_{\vect{y}}T(\vect{y})f\text{.}
	\end{equation*}
	If we further look at a decomposition $\mat{M} = \mat{J}\mat{N}$ we see from the definition of $T(\vect{y})$, that $V_{\mat{N}}^g \subset V_{\mat{M}}^f$ and it further holds that
	\begin{equation*}
		\overline{\mathcal F ( \mat{N} ) }\left(T(\vect{y})g\right)_{\vect{y}\in\patSet{\mat{N}}}
		= \sqrt{\frac{n}{m}}\mat{A}\overline{\mathcal F (\mat{M})}\left(T(\vect{y})f\right)_{\vect{y}\in \patSet{\mat{M}}}\text{,}
	\end{equation*}
	where for a certain order of $\gGroup{\mat{M}^T}$ it follows
	\begin{equation}\label{eq:OrderingA}
		\mat{A} = \left(\text{diag}\left(\hat a_{\vect{k}+\mat{J}^T\vect{l}}\right)_{\vect{h}\in\gGroup{\mat{N}^T}}\right)_{\vect{l}\in\gGroup{\mat{J}^T}} \in \mathbb C^{n\times m}\text{.}
	\end{equation}
	Applying this to a set of functions $g_1,\ldots,g_{|\det \mat{J}|}$, whose translates are pairwise orthogonal, we get a decomposition of $V_{\mat{M}}^f$ into subspaces, cf.~\citep[Theorem 4.1]{LangemannPrestin2010}.
	\subsection{Properties of a multivariate decomposition}
	Let $\mat{J},\mat{N}\in\mathbb Z^{d\times d}$ be regular matrices and $\mat{M} = \mat{J}\mat{N}$. We denote the bases of the corresponding patterns by $\{\vect{x}_1,\ldots,\vect{x}_{d_\mat{M}}\}, 
	\{\vect{y}_1,\ldots,\vect{y}_{d_\mat{N}}\}$ and $ \{\vect{z}_1,\ldots,\vect{z}_{d_\mat{J}}\}$ for $\patSet{\mat{M}},\ 
	\patSet{\mat{N}}$ and $\patSet{\mat{J}}$.
	These bases are ordered with respect to the cycle lengths, cf. \eqref{eq:basisPM}, e.g. $\vect{y}_j$ corresponds to a cycle of length $\epsilon_{d-d_{\mat{M}}+j}$, i.e. $k\vect{y}_j \equiv 0 \bmod \mat{I} \Leftrightarrow k=h\epsilon_{d-d_{\mat{M}}+j},\quad h\in\mathbb Z$. We can characterise subspace $\patSet{\mat{N}}$ of $\patSet{\mat{M}}$ using the following Lemma.

	\begin{lem}[projection]\label{lem:BasisprojektioninPM}
		Let $\mat{J},\mat{N}\in\mathbb Z^{d\times d}$ be regular matrices and $\mat{M} = \mat{J}\mat{N}$. There exists a matrix  $\mat{P} \in \mathbb N_0^{d_{\mat{M}}\times d_{\mat{N}}}$ such that 
		for an arbitrary $\vect{w}\in\patSet{\mat{N}}$ we have
		\begin{equation}\label{eq:adressing}
			\exists \vect{\mu} \in\mathbb E_{\mat{N}}\ \ 
			\exists \vect{\lambda} \in\mathbb E_{\mat{M}}\ \ :
			\vect{w} = 
			\sum_{k=1}^{d_{\mat{N}}}\mu_k\vect{y}_k
			= \sum_{l=1}^{d_{\mat{M}}}\lambda_l\vect{x}_l\text{,}
		\end{equation}
		where $\vect{\lambda} = \mat{P}\vect{\mu}$ holds.
	\end{lem}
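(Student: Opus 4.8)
The plan is to recognise the identity $\vect{\lambda}=\mat{P}\vect{\mu}$ as the coordinate expression of the inclusion of $\patSet{\mat{N}}$ into $\patSet{\mat{M}}$, read off in the two bases from~\eqref{eq:basisPM}. First I would turn the factorisation $\mat{M}=\mat{J}\mat{N}$ into a lattice inclusion: from $\mat{M}^{-1}=\mat{N}^{-1}\mat{J}^{-1}$ and $\mat{J}^{-1}\mathbb Z^d\supseteq\mathbb Z^d$ (valid since $\mat{J}\in\mathbb Z^{d\times d}$) one gets
\begin{equation*}
	\lattice{\mat{N}}=\mat{N}^{-1}\mathbb Z^d\subseteq\mat{N}^{-1}\mat{J}^{-1}\mathbb Z^d=\lattice{\mat{M}}\text{.}
\end{equation*}
Reducing modulo $\mat{I}$ this exhibits $\patSet{\mat{N}}$ as a subgroup of $\patSet{\mat{M}}$, and the map $\iota\colon\patSet{\mat{N}}\to\patSet{\mat{M}}$, $\vect{w}\mapsto\calcMod{\vect{w}}{\patSet{\mat{M}}}$, is a monomorphism for $\calcMod{+}{\patSet{\mat{N}}}$ and $\calcMod{+}{\patSet{\mat{M}}}$; this is exactly the $\vect{z}=\vect{0}$ instance of the decomposition~\eqref{eq:decomposition}. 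In particular every $\vect{w}\in\patSet{\mat{N}}$ already lies in $\lattice{\mat{M}}$ and therefore admits the second expansion $\vect{w}=\calcMod{\sum_l\lambda_l\vect{x}_l}{\patSet{\mat{M}}}$ of~\eqref{eq:adressing}, while the first expansion is just the basis property of $\{\vect{y}_k\}$.

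Next I would construct $\mat{P}$ columnwise from the images of the generators. Since each $\vect{y}_k\in\patSet{\mat{N}}\subseteq\lattice{\mat{M}}$, the basis $\{\vect{x}_1,\dots,\vect{x}_{d_{\mat{M}}}\}$ of $\patSet{\mat{M}}$ yields unique coefficients
\begin{equation*}
	\vect{y}_k=\calcMod{\sum_{l=1}^{d_{\mat{M}}}P_{l,k}\vect{x}_l}{\patSet{\mat{M}}},\qquad 0\le P_{l,k}<\epsilon^{\mat{M}}_{d-d_{\mat{M}}+l}\text{,}
\end{equation*}
so that $\mat{P}=(P_{l,k})\in\mathbb N_0^{d_{\mat{M}}\times d_{\mat{N}}}$; the format is consistent because $d_{\mat{N}}\le d_{\mat{M}}$, a subgroup of a finite abelian group needing no more cyclic generators than the group itself. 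For a general $\vect{w}=\calcMod{\sum_k\mu_k\vect{y}_k}{\patSet{\mat{N}}}$ I would then apply $\iota$ and exploit that it is a homomorphism:
\begin{equation*}
	\vect{w}=\calcMod{\sum_{k=1}^{d_{\mat{N}}}\mu_k\vect{y}_k}{\patSet{\mat{M}}}
	=\calcMod{\sum_{l=1}^{d_{\mat{M}}}\Bigl(\sum_{k=1}^{d_{\mat{N}}}P_{l,k}\mu_k\Bigr)\vect{x}_l}{\patSet{\mat{M}}}
	=\calcMod{\sum_{l=1}^{d_{\mat{M}}}(\mat{P}\vect{\mu})_l\vect{x}_l}{\patSet{\mat{M}}}\text{,}
\end{equation*}
which is the claimed law $\vect{\lambda}=\mat{P}\vect{\mu}$ under the standing convention of suppressing the reduction $\calcMod{\cdot}{\patSet{\mat{M}}}$.

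The step I expect to require the most care is the bookkeeping of this reduction, that is, how $\vect{\lambda}=\mat{P}\vect{\mu}$ is to be reconciled with the requirement $\vect{\lambda}\in\mathbb E_{\mat{M}}$. The reduced coordinate map $\vect{\mu}\mapsto\vect{\lambda}$ is only $\mathbb Z$-linear modulo the cycle lengths $\epsilon^{\mat{M}}_{d-d_{\mat{M}}+l}$, so in general the raw product $\mat{P}\vect{\mu}$ still has to be carried into its representative in $\mathbb E_{\mat{M}}$ componentwise, and accordingly the identity is meant with the same suppression of $\calcMod{\cdot}{\patSet{\mat{M}}}$ as the surrounding sums. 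That the resulting modular law is well defined and respects the group structure follows from the interlacing $\epsilon^{\mat{N}}_j\mid\epsilon^{\mat{M}}_j$ for all $j$, the invariant-factor form of the subgroup relation $\patSet{\mat{N}}\le\patSet{\mat{M}}$ implied by~\eqref{eq:group-isomorphisms}, which pairs the cycle of $\vect{y}_k$ with that of $\vect{x}_{k+d_{\mat{M}}-d_{\mat{N}}}$. As a sanity check, in dimension one $\mat{M}=\mat{J}\mat{N}$ gives $\vect{y}_1=\tfrac1N=\tfrac JM=J\vect{x}_1$, hence $\mat{P}=(J)$ and $\lambda=J\mu<M$, so no reduction is needed and the familiar scaling law is recovered.
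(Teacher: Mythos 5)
Your proof takes essentially the same route as the paper: you build $\mat{P}$ columnwise by expanding each basis vector $\vect{y}_k$ of $\patSet{\mat{N}}$ in the basis $\{\vect{x}_1,\ldots,\vect{x}_{d_{\mat{M}}}\}$ of $\patSet{\mat{M}}$ and then conclude $\vect{\lambda}=\mat{P}\vect{\mu}$ by linearity, exactly as in the paper's proof. Your additions --- the explicit lattice inclusion $\lattice{\mat{N}}\subseteq\lattice{\mat{M}}$, the bound $d_{\mat{N}}\le d_{\mat{M}}$, and the observation that $\mat{P}\vect{\mu}$ may need componentwise reduction into $\mathbb E_{\mat{M}}$ (which the paper handles implicitly via its stated convention of suppressing $\calcMod{\cdot}{\patSet{\mat{M}}}$, and makes explicit later by writing $\vect{\lambda}=\calcMod{\mat{P}\vect{\mu}}{\mathbb E_{\mat{M}}}$ in the decomposition algorithm) --- are correct refinements rather than a different argument.
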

	\begin{proof}
		Equation~\eqref{eq:adressing} follows from the fact that $\patSet{\mat{N}} \subset \patSet{\mat{M}}$ and decomposing $\vect{w}$ in both bases. Because every basis vector $\vect{y}_k\in\patSet{\mat{M}}$ can also be decomposed, we have
		\begin{equation*}
			\exists \vect{p}_k = (p_{l,k})_{l=1}^{d_{\mat{M}}}\in\mathbb N^{d_{\mat{M}}}_0
			: \vect{y}_k = \sum_{l=1}^{d_{\mat{M}}}p_{l,k}\vect{x}_l,\quad k=1,\ldots,d_{\mat{N}}\text{.}
		\end{equation*}
		This is then applied to
		\begin{equation*}
			\vect{w} = \sum_{k=1}^{d_{\mat{N}}}\mu_k\vect{y}_k
			= \sum_{k=1}^{d_{\mat{N}}}\mu_k \sum_{l=1}^{d_{\mat{M}}}p_{l,k}\vect{x}_l
			= \sum_{l=1}^{d_{\mat{M}}}\sum_{k=1}^{d_{\mat{N}}}\mu_k p_{l,k}\vect{x}_l
			= \sum_{l=1}^{d_{\mat{M}}}\lambda_l\vect{x}_l\text{.}
		\end{equation*}
	\end{proof}
	Using this property we can state a generalisation of~\citep[Theorem 4.1.2a]{Se98}, which deals with the translation invariance of subspaces in the one-dimensional case. For any regular regular matrix $\mat{J}$ fulfilling $\mat{N} = \mat{J}^{-1}\mat{M}\in\mathbb Z^{d\times d}$, the $\mat{M}$-invariant space is $\mat{N}$-invariant. Additionally, we are also able to characterise dimensions and directions of the subspace $\patSet{\mat{M}}$ depending on $\mat{J}$ as described in the following theorem. In the one-dimensional case this would just be the value $\mat{J}\in\mathbb N$ which is the number of points added per point in $\patSet{\mat{N}}, \mat{N}\in\mathbb N$. Usually one would choose this factor to be 2 to get the classical dyadic decomposition scheme of one-dimensional wavelet analysis. In the multivariate case however, different matrices $\mat{J}$ of the same modulus of its determinant exist, which enable different extensions of a pattern $\patSet{\mat{N}}$. These are first stated for some special matrices $\mat{J}$, i.e. $d_{\mat{J}}=1$, in the following theorem, which includes all matrices of absolute determinant 2. We will discuss the general case after the theorem.

	\begin{thm}[multivariate scaling property]\label{thm:Skalierungseigenschaft}\ \\
		Let $\mat{J},\mat{N} \in\mathbb{Z}^{d\times d}$ be regular matrices and $\mat{M}=\mat{J}\mat{N}$, such that the dimension $d_{\mat{J}} = 1$. Denote by $\epsilon_j^{\mat{M}},\epsilon_j^{\mat{J}},\epsilon_j^{\mat{N}}$, $j=1,\ldots,d$, their elementary divisors. Then it holds
		\begin{enumerate}[1)]
			\item for $\mat{N}^{-1}\vect{z}_1 \not \in \text{span}\{\vect{y}_1,\ldots,\vect{y}_{d_{\mat{N}}}\}$, that
		\begin{enumerate}[a)]
			\item $d_{\mat{M}} = d_{\mat{N}} + 1$
			\item $\exists\vect{x}_l \in \{\vect{x}_1,\ldots,\vect{x}_{d_{\mat{M}}}\}$
			\begin{equation*}
				\mat{N}^{-1}\vect{z}_1 = \lambda\vect{x}_l \bmod \mat{I},\quad\lambda\in\{1,\ldots,\epsilon_d^{\mat{J}}-1\}\text{ and } \epsilon_l^{\mat{M}} = \epsilon_{d}^{\mat{J}}\text{,}
			\end{equation*}
		\end{enumerate}

		\item for $\mat{N}^{-1}\vect{z}_1 \in \text{span}\{\vect{y}_1,\ldots,\vect{y}_{d_{\mat{N}}}\}$, that
		\begin{enumerate}[a)]
			\item $d_{\mat{M}} = d_{\mat{N}}$
			\item the decompositions
			\begin{equation*}
				\begin{split}
					\mat{N}^{-1}\vect{z}_1 = \sum_{l=1}^{d_{\mat{M}}}\lambda_l\vect{x}_l,\quad  \vect{\lambda}\in\mathbb E_{\mat{M}}\text{ and }
					\mat{N}^{-1}\vect{z}_1 = \sum_{k=1}^{d_{\mat{N}}}\mu_k\vect{y}_k, \quad \vect{\mu} \in \mathbb Q^d
				\end{split}
			\end{equation*}
			\noindent exist and fulfill
			\begin{equation}\label{eq:scalingBetweenPatterns}
				\vect{\lambda} = \frac{1}{\epsilon_d^{\mat{J}}}\mat{P}\vect{\mu}\text{, }
			\end{equation}
		\end{enumerate}
		where $\mat{P} \in \mathbb N_0^{d_{\mat{M}}\times d_{\mat{M}}}$ is regular.
		\end{enumerate}
	\end{thm}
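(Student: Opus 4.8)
The plan is to reduce everything to the single vector $\vect{w}:=\mat{N}^{-1}\vect{z}_1$ and then read off both the dimension and the coordinate relations from how $\vect{w}$ sits relative to $\patSet{\mat{N}}$. Since $d_{\mat{J}}=1$, the pattern $\patSet{\mat{J}}$ is the cyclic group $\langle\vect{z}_1\rangle$ of order $\epsilon_d^{\mat{J}}$, so applying the decomposition~\eqref{eq:decomposition} to $\mat{M}=\mat{J}\mat{N}$ shows that every $\vect{y}\in\patSet{\mat{M}}$ equals $\calcMod{(\vect{x}+l\,\vect{w})}{\patSet{\mat{M}}}$ with $\vect{x}\in\patSet{\mat{N}}$ and $0\le l<\epsilon_d^{\mat{J}}$. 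Hence $\patSet{\mat{M}}$ is generated by $\{\vect{y}_1,\dots,\vect{y}_{d_{\mat{N}}}\}$ together with $\vect{w}$, and $\patSet{\mat{M}}/\patSet{\mat{N}}$ is cyclic of order $\epsilon_d^{\mat{J}}$ with generator $\vect{w}$. This is the structural backbone for both cases.

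\textbf{The dimension statements 1a) and 2a).} Because the basis~\eqref{eq:basisPM} consists of exactly $d_{\mat{M}}$ linearly independent vectors, $d_{\mat{M}}=\dim\text{span}\{\vect{x}_1,\dots,\vect{x}_{d_{\mat{M}}}\}$, and likewise $d_{\mat{N}}=\dim\text{span}\{\vect{y}_1,\dots,\vect{y}_{d_{\mat{N}}}\}$. In case 2) the hypothesis $\vect{w}\in\text{span}\{\vect{y}_1,\dots,\vect{y}_{d_{\mat{N}}}\}$ means the generating set adds no new direction, so the span is unchanged and $d_{\mat{M}}=d_{\mat{N}}$. In case 1) the vector $\vect{w}$ is independent of the $\vect{y}_k$, so the generating set spans a $(d_{\mat{N}}+1)$-dimensional space; combined with the fact that $d_{\mat{N}}+1$ generators already suffice, this forces $d_{\mat{M}}=d_{\mat{N}}+1$.

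\textbf{The coordinate statements 1b) and 2b).} For 1b) I would use that $\vect{w}$ has order $\epsilon_d^{\mat{J}}$ in the quotient and---after showing that $\patSet{\mat{N}}\oplus\langle\vect{w}\rangle$ is a direct sum---conclude that $\langle\vect{w}\rangle$ is a new invariant-factor cycle of length $\epsilon_d^{\mat{J}}$. By the divisibility ordering of the $\epsilon_j^{\mat{M}}$, this cycle is realised by a single basis vector $\vect{x}_l$ with $\epsilon_l^{\mat{M}}=\epsilon_d^{\mat{J}}$, and expanding $\vect{w}$ in the $\{\vect{x}_l\}$-basis leaves only that coordinate, giving $\vect{w}\equiv\lambda\vect{x}_l\bmod\mat{I}$ with $1\le\lambda<\epsilon_d^{\mat{J}}$ (nonzero since $\vect{w}\notin\patSet{\mat{N}}$). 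For 2b) both bases span the same space, so there is a regular rational change of basis between the two coordinate systems; writing $\vect{w}$ once in each basis and invoking the projection matrix of Lemma~\ref{lem:BasisprojektioninPM} relating $\patSet{\mat{N}}$- to $\patSet{\mat{M}}$-coordinates yields $\vect{\lambda}=\tfrac{1}{\epsilon_d^{\mat{J}}}\mat{P}\vect{\mu}$. The factor $\epsilon_d^{\mat{J}}$ appears because passing from $\patSet{\mat{N}}$ to $\patSet{\mat{M}}$ refines the relevant cycle exactly $\epsilon_d^{\mat{J}}$-fold, and regularity of $\mat{P}$ follows from $d_{\mat{M}}=d_{\mat{N}}$ together with invertibility of the change of basis.

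\textbf{Main obstacle.} The hard part will be the dimension dichotomy itself, i.e.\ proving that geometric independence of $\vect{w}$ from $\{\vect{y}_k\}$ matches the jump in the number of nontrivial elementary divisors. The delicate point is that $\patSet{\mat{M}}/\patSet{\mat{N}}$ being cyclic of order $\epsilon_d^{\mat{J}}$ does not by itself decide whether this quotient splits off a genuinely new invariant factor or is absorbed into an existing one; controlling this requires tracking, via the divisibility $\epsilon_j^{\mat{M}}\mid\epsilon_{j+1}^{\mat{M}}$ and the constraint $d_{\mat{J}}=1$, exactly how $\langle\vect{w}\rangle$ meets $\patSet{\mat{N}}$, and it is precisely here that the direct-sum versus absorption argument must be carried out with care rather than inferred from the span condition alone.
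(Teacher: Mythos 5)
Your overall route is the same as the paper's: reduce everything to $\vect{w}=\mat{N}^{-1}\vect{z}_1$ via the unique decomposition~\eqref{eq:decomposition}, split into two cases according to the span condition, and derive~\eqref{eq:scalingBetweenPatterns} from the projection matrix of Lemma~\ref{lem:BasisprojektioninPM}. Within case 2b) your sketch is structurally the paper's argument, but the one step that actually produces the factor $\epsilon_d^{\mat{J}}$ is missing: by~\eqref{eq:basisPM} one has $\epsilon_d^{\mat{J}}\vect{z}_1\in\mathbb Z^d$, hence $\epsilon_d^{\mat{J}}\vect{w}=\mat{N}^{-1}\bigl(\epsilon_d^{\mat{J}}\vect{z}_1\bigr)\in\lattice{\mat{N}}$, so it is $\epsilon_d^{\mat{J}}\vect{w}$ (not $\vect{w}$) that has an integer coefficient vector $\vect{\mu}\in\patIndex{\mat{N}}$ to which Lemma~\ref{lem:BasisprojektioninPM} applies; dividing by $\epsilon_d^{\mat{J}}$ then gives~\eqref{eq:scalingBetweenPatterns}. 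Saying the relevant cycle is ``refined $\epsilon_d^{\mat{J}}$-fold'' describes the conclusion rather than deriving it.

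The genuine gap is the one you concede in your closing paragraph: statements 1a) and 1b) are announced but never proved, and this deferral is not a matter of routine care, because the inference you would need --- linear independence of $\{\vect{y}_1,\ldots,\vect{y}_{d_{\mat{N}}},\vect{w}\}$ over $\mathbb R$ forces a new invariant factor --- is false. Take $\mat{N}=\operatorname{diag}(2,1)$ and $\mat{J}=\bigl(\begin{smallmatrix}2&0\\-1&1\end{smallmatrix}\bigr)$, so that $\mat{M}=\mat{J}\mat{N}=\bigl(\begin{smallmatrix}4&0\\-2&1\end{smallmatrix}\bigr)$, $d_{\mat{J}}=1$ and $\epsilon_2^{\mat{J}}=2$. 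Admissible Smith normal forms yield the basis vectors $\vect{y}_1=(\tfrac12,0)^T$ of $\patSet{\mat{N}}$ and $\vect{z}_1=(\tfrac12,\tfrac12)^T$ of $\patSet{\mat{J}}$, so $\vect{w}=(\tfrac14,\tfrac12)^T\notin\operatorname{span}\{\vect{y}_1\}$ and you are in case 1). Nevertheless $2\vect{w}=(\tfrac12,1)\equiv\vect{y}_1\bmod\mat{I}$, so the cyclic group generated by $\vect{w}$ absorbs $\patSet{\mat{N}}$ instead of complementing it: $\patSet{\mat{M}}$ is cyclic of order $4$, $d_{\mat{M}}=1=d_{\mat{N}}$, and the nontrivial elementary divisor of $\mat{M}$ is $4\neq\epsilon_2^{\mat{J}}$, so both 1a) and 1b) fail for these choices. (With a different admissible choice, $\vect{y}_1=(\tfrac12,1)^T$, the same $\mat{M}=\mat{J}\mat{N}$ lands in case 2), whose conclusions do hold; the case split itself is representative-dependent.) The moral is that congruence $\bmod\ \mat{I}$ creates relations among generators that are invisible to the linear span, so the ``direct sum versus absorption'' dichotomy must be controlled by a condition formulated modulo $\mat{I}$ --- for instance triviality of the intersection of the group generated by $\vect{w}$ with $\patSet{\mat{N}}$ --- and cannot be inferred from the span hypothesis at all. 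Your instinct about where the difficulty sits is exactly right, and in fact the paper's own proof jumps over this very step (it concludes 1a) directly from linear independence); but flagging the obstacle does not overcome it, and as written your proposal establishes only case 2).
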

	\begin{proof}
		It holds $\mat{J}\vect{z}_1 = \mat{M}\mat{N}^{-1}\vect{z}_1 \in \mathbb Z^d$, hence $\mat{N}^{-1}\vect{z}_1\in\patSet{\mat{M}}$. If $\,\mat{N}^{-1}\vect{z}_1 \not \in \text{span}\{\vect{y}_1,\ldots,\vect{y}_{d_{\mat{N}}}\}$, then $\text{span}\{\vect{y}_1,\ldots,\vect{y}_{d_{\mat{N}}}\} \subset \text{span}\{\vect{y}_1,\ldots,\vect{y}_{d_{\mat{N}}},\mat{N}^{-1}\vect{z}_1\}$, which also holds for the spans if we restrict the weighted sums to $\mathbb E_{\mat{N}}$ and $\mathbb E_{\mat{N}}\times \mathbb E_{\mat{J}}$ respectively.

		From~\eqref{eq:decomposition} we have for any $\vect{x}\in \patSet{\mat{M}}$, the unique decomposition into two elements
		\begin{equation*}
			\vect{x} = \calcMod{\vect{y} + \mat{N}^{-1}\vect{z}}{\patSet{\mat{M}}},\quad \vect{y}\in\patSet{\mat{N}}$ and $\vect{z}\in\patSet{\mat{J}}\text{.}
		\end{equation*}
		This can be decomposed uniquely using the bases of $\patSet{\mat{N}}$ and $\patSet{\mat{J}}$, which reads
		\begin{equation*}
			\exists ! \vect{\mu} \in \patIndex{\mat{N}}\ 
		    \exists ! \theta \in \patIndex{\mat{J}}\,:\,
			\vect{x} = \sum_{k=1}^{d_{\mat{N}}}\mu_k\vect{y}_k + \theta\mat{N}^{-1}\vect{z}_1\text{, }
		\end{equation*}
		where the second term is only one summand due to $d_{\mat{J}}=1$. Looking at assumption 1 we conclude, that the set $\{\vect{y}_1,\ldots,\vect{y}_{d_{\mat{N}}},\mat{N}^{-1}\vect{z}_1\}$ is linear independent, which is a). Statement b) follows from the fact that $\patSet{\mat{N}}$ is a subgroup of $\patSet{\mat{M}}$ and hence all cycles of the first are also existent in the latter one. This also implies that exactly one basis vector $\vect{x}_l$ spans the new cycle $\mat{N}^{-1}\vect{z}_1$.

		Using the same approach, it holds for the second case that $\mat{N}^{-1}\vect{z}_1\in\operatorname{span}\{\vect{y}_1,\ldots,\vect{y}_{d_{\mat{N}}}\}$ and hence we can decompose 
		\begin{equation*}
			\exists ! \vect{\nu} \in \patIndex{\mat{N}}\ 
		    \exists ! \vect{\mu} \in \mathbb Q^{d_{\mat{N}}}\ 
		    \exists ! \theta \in \patIndex{\mat{J}}\,:\,	
			\mat{N}^{-1}\vect{z}_1 = \sum_{k=1}^{d_{\mat{N}}}\nu_k\vect{y}_k + \theta\sum_{k=1}^{d_{\mat{N}}}\mu_k\vect{y}_k,\\
		\end{equation*}
		which is a). Using $\epsilon_d^{\mat{J}}\vect{z}_1 \in \mathbb Z^d$ and the fact that
		\begin{equation*}
	\exists! \vect{\mu} \in \patIndex{\mat{N}} : \epsilon_d^{\mat{J}}\mat{N}^{-1}\vect{z}_1 = \sum_{k=1}^{d_{\mat{N}}} \mu_k\vect{y}_k
		\end{equation*}
		we can apply Lemma~\ref{lem:BasisprojektioninPM} and see that
		\begin{equation*}
	\exists! \vect{\lambda}\in\patIndex{\mat{M}}
			: \mat{N}^{-1}\vect{z}_1
			= \sum_{l=0}^{d_{\mat{M}}} \lambda_l\vect{x}_l
			= \frac{1}{\epsilon_d^{\mat{J}}}\sum_{k=0}^{d_{\mat{N}}} \mu_k\vect{y}_k
			=
			\frac{1}{\epsilon_d^{\mat{J}}}
			\sum_{k=0}^{d_{\mat{N}}} \mu_k
			 \sum_{l=1}^{d_{\mat{M}}}p_{l,k}\vect{x}_l\text{, }
		\end{equation*}
		which is~\eqref{eq:scalingBetweenPatterns}.
	\end{proof}
	The equality~\eqref{eq:scalingBetweenPatterns} also generalises the notation, that the one-dimensional Fourier transform, when  the sampling points double in their number (halfening the distance), the frequencies double. These one-dimensional Fourier transforms inside the multivariate one can be seen in Theorem~\ref{thm:direcctKroneckerBasis} and are characterised in Theorem~\ref{thm:Skalierungseigenschaft} above with their directions. In fact even for the first case a cycle, i.e. a one-dimensional Fourier transform, gets increased in size, that was $\epsilon_j^{\mat{N}} = 1$ before and hence was not part of the basis.

	Theorem~\ref{thm:Skalierungseigenschaft} can also be generalised to more than one cycle in $\mat{J}$ by decomposing the basis of $\patSet{\mat{J}}$ into two distinct parts $B_1,B_2$ of basis vectors fulfilling the first and the second case of the Theorem. Then the dimension gets $d_{\mat{M}} = d_{\mat{N}} + |B_1|$ and for each basis vector in $B_2$ a scaling property as in~\eqref{eq:scalingBetweenPatterns} holds.

	\subsection{A fast decomposition algorithm} 
	\label{sub:the_decomposition_algorithm}
	Given a decomposition $\mat{M}=\mat{J}\mat{N}$ of a regular integral matrix $\mat{M}$ and the functions\\
	$f,g_1,\ldots,g_{|\det\mat{J}|}$ such that 
	\begin{equation*}
	V_{\mat{M}}^f = \bigoplus_{j=1}^{|\det\mat{J}|} V_{\mat{N}}^{g_j} \text{,}\quad\text{hence } g_j = \sum_{\vect{y}\in\patSet{\mat{M}}}b_{j,\vect{y}}T(\vect{y})f\in V_{\mat{M}}^f,\quad j=1,\ldots,|\det\mat{J}|	
	\end{equation*}
	 we can decompose any $\gamma = \sum_{\vect{y}\in\patSet{\mat{M}}} a_{\vect{y}}T(\vect{y})f\in V_{\mat{M}}^f$ using any pair of bases
	$\{\vect{h}_1,\ldots,\vect{h}_{d_{\mat{M}}}\}$ of $\gGroup{\mat{M}^T}$ and 
	$\{\vect{k}_1,\ldots,\vect{k}_{d_{\mat{N}}}\}$ of $\gGroup{\mat{N}^T}$ using the following steps:
	\begin{enumerate}
		\item Compute $\vect{\hat a} = \mathcal F(\mat{M})\vect{a}$ and $\vect{\hat b}_{l} = \mathcal F(\mat{M})\vect{b}_l,\quad l=1,\ldots,|\det \mat{J}|$
		\item Calculate $\mat{P}$ for the two bases $\{\vect{h}_1,\ldots,\vect{h}_{d_{\mat{M}}}\}$ of $\gGroup{\mat{M}^T}$ and 
		$\{\vect{k}_1,\ldots,\vect{k}_{d_{\mat{N}}}\}$ of $\gGroup{\mat{N}^T}$ using Lemma~\ref{lem:BasisprojektioninPM}.
		\item Calculate for each basis vector of a basis $\{\vect{l}_1,\ldots,\vect{l}_{d_{\mat{J}}}\}$ of $\gGroup{\mat{J}^T}$ the decomposition
		\begin{equation*}
			\mat{N}^T\vect{l}_j =
			\calcMod{\sum_{i=1}^{d_{\mat{M}}} q_{i,j}\vect{h}_i}{\gGroup{\mat{M}^T}},\quad (q_{i,j})_{i=1}^{d_{\mat{M}}}\in\mathbb E_{\mat{M}}, \quad j=1,\ldots,d_{\mat{J}}
		\end{equation*}
		and apply these to address $\mat{N}^T\vect{l}\in\gGroup{\mat{M}^T},\vect{l}\in\gGroup{\mat{J}^T}$ using their coefficient vector $\vect{\lambda}_{\vect{l}}\in\mathbb E_{\mat{M}}$.
		\item Compute $\vect{\lambda}=\calcMod{\mat{P}\vect{\mu}}{\mathbb E_{\mat{M}}}$ for each $\vect{h}\in\gGroup{\mat{N}^T}$, which can be reached running through all $\vect{\mu}\in\mathbb E_{\mat{N}}$ and compute
		\begin{equation*}
			\hat d_{j,\vect{h}} = \frac{1}{\sqrt{|\det\mat{J}|}}\sum_{\vect{l}\in\gGroup{\mat{J}^T}} \hat b_{j,\vect{h}+\mat{N}^T\vect{l}} \hat a_{\vect{h}+\mat{N}^T\vect{l}},\quad j=1,\ldots,|\det{\mat{J}}|,\quad\vect{h} \in \gGroup{\mat{N}^T}
		\end{equation*}
		addressing $\vect{\hat a}, \vect{\hat b}$ using $\calcMod{\vect{\lambda}+\vect{\lambda}_{\vect{l}}}{\mathbb E_{\mat{M}}}$ and $\vect{\hat d}_{j,\vect{h}}$ using $\vect{\mu}$.
		\item Perform the inverse Fourier Transform with each $\vect{\hat d}_{\!j}$ to get the resulting decomposition
		\begin{equation*}
			\gamma = \sum_{j=1}^{|\det{\mat{J}}|}\sum_{\vect{y}\in\patSet{\mat{N}}} d_{j,\vect{y}}T(\vect{y})g_j\text{.}
		\end{equation*}
	\end{enumerate}
		For the special case $|\det{\mat{J}}|=2$, where $g_1$ represents in some sense the low frequent part of $f$ and $g_2$ the high frequent part of $f$, this describes one step in a dyadic fast wavelet transform. For any following step in the decomposition, i.e. $\mat{N} = \mat{J'}\mat{O}$ to split $V_{\mat{N}}^{g_1}$ into two spaces, we can apply the steps 2--4 to $\vect{\hat b}_1$.

		 Given the number $m = |\det\mat{M}|$ of data points  as input we obtain the complexity of computation for the algorithm. We further note the influence of the dimension $d$, though it is constant with respect to the amount of sampled data points $m$. The steps 2 and 3 have to solve at most $|\det{\mat{J}}|+d$  linear systems of equations with at most $d$ unknowns each, the first step is by~\eqref{eq:fft-complexity} $2c_{\text{FFT}}m\log m + O(m)$, the same without the factor $2$ holds for the last step, due to $|\det \mat{J}|n \log n < m\log m$. Finally, step 4 needs $c|\det\mat{J}|m = O(m)$ steps, where the constant is just depending on the speed of multiplication on that machine. In total this is
	\begin{equation*}
		3c_{\text{FFT}}m\log m + c|\det\mat{J}|m +c_1|\det\mat{J}|d^4 + c_2(m +d^4)
	\end{equation*}
	where $c,c_1,c_2$ are constants just depending on speed of multiplication (on a specific machine) and they are especially independent of $d,m$ and $|\det\mat{J}|$. Performing multiple decompositions on the same set of data would only affect the second and third term, because the Fourier transform would be computed once in the beginning and for every wavelet level in total once at the end, which are in total $m$ coefficients.
	%
	%
	%
	%
	\section{Example}
	\label{sec:example}

	\subsection{Fourier transform}
	As an example for the Fourier transform we look at different patterns $\patSet{\mat{M}}$ with the same number $m=|\patSet{\mat{M}}|=|\det{\mat{M}}|$ of points. The pattern normal form~\citep[Lemma 2.5]{LangemannPrestin2010} can be used to look at different classes of matrices that induce the same pattern. Restricting the example to the case $d=2$, all normal forms have the form 
	\begin{equation}\label{eq:FFTexampleMatrix}
	\mat{M} = \begin{pmatrix} l & i\\0& k\end{pmatrix}\text{, where }
	m=kl,\quad k,l\in\mathbb N^+\text{  and }0\leq i<k\text{.}
	\end{equation}
	Furthermore all different cycle lengths $\epsilon_1,\epsilon_2$, which may occur, are given by all divisors $i$ of $k$.

	Given any implementation of the one-dimensional Fourier transform, the Algorithm~\ref{alg:fourier} can be easily implemented. For any of the recursive calls in line 16, the set of data a subroutine is working on, is distinct from all the other. Hence, the calls can also be parallelized.

	We compare the implementation of a serial and a parallel algorithm of the multivariate Fourier transform, where the first one is compared to the usual one-dimensional Fourier transform on the same amount of data, i.e. an $m$-dimensional vector. The algorithms are all using the implementation of \lstinline!Fourier! given by \emph{Mathematica} 8 running on an Intel Pentium 4 Core 2 Quad with $2\,$Ghz each and $4\,$GB memory.
		%
		%
		\begin{table}[htbp]
			\centering
			\begin{tabular}{rrrrrr}\toprule $i$ & cycles & \mbox{serial (sec.)} & factor & \mbox{parallel (sec.)} & gain\\\midrule
				$1$ & $\begin{pmatrix}
				 4194304
		\end{pmatrix}$ & $0.266664$ & $1.02242$ 

				 & $0.270647$ & $0.98528$\\

				$2$ & $\begin{pmatrix}
				 2 & 
				 2097152
		\end{pmatrix}$ & $0.471722$ & $1.80864$ 

				 & $0.343969$ & $1.37141$\\

				$4$ & $\begin{pmatrix}
				 4 & 
				 1048576
		\end{pmatrix}$ & $0.468220$ & $1.79522$ 

				 & $0.299426$ & $1.56373$\\

				$8$ & $\begin{pmatrix}
				 8 & 
				 524288
		\end{pmatrix}$ & $0.457126$ & $1.75268$ 

				 & $0.282309$ & $1.61924$\\

				$16$ & $\begin{pmatrix}
				 16 & 
				 262144
		\end{pmatrix}$ & $0.457321$ & $1.75342$ 

				 & $0.286432$ & $1.59661$\\

				$32$ & $\begin{pmatrix}
				 32 & 
				 131072
		\end{pmatrix}$ & $0.469080$ & $1.79851$ 

				 & $0.299551$ & $1.56595$\\

				$64$ & $\begin{pmatrix}
				 64 & 
				 65536
		\end{pmatrix}$ & $0.466902$ & $1.79016$ 

				 & $0.315000$ & $1.48223$\\

				$128$ & $\begin{pmatrix}
				 128 & 
				 32768
		\end{pmatrix}$ & $0.572192$ & $2.19386$ 

				 & $0.412570$ & $1.38690$\\

				$256$ & $\begin{pmatrix}
				 256 & 
				 16384
		\end{pmatrix}$ & $0.920226$ & $3.52826$ 

				 & $0.741036$ & $1.24181$\\

				$512$ & $\begin{pmatrix}
				 512 & 
				 8192
		\end{pmatrix}$ & $1.144126$ & $4.38672$ 

				 & $0.530716$ & $2.15582$\\

				$1024$ & $\begin{pmatrix}
				 1024 & 
				 4096
		\end{pmatrix}$ & $0.949403$ & $3.64013$ 

				 & $0.424802$ & $2.23493$\\

				$0$ & $\begin{pmatrix}
				 2048 & 
				 2048
		\end{pmatrix}$ & $0.907286$ & $3.47865$ 

				 & $0.411705$ & $2.20373$\\
		\bottomrule
			\end{tabular}
			\caption{For all possible cycles of $\mat{M}$ from \eqref{eq:FFTexampleMatrix}, where $k=l=2^{11}$ we compare a serial and a parallel implementation of the two-dimensional FFT. The first is compared to the one-dimensional FFT of $m=|\det\mat{M}|=2^{22}$ points of data using \lstinline!Fourier! in \emph{Mathematica}. This took $0.260816$ seconds, which leads to the factor (col. 4). The table lists all possible elementary divisors $i$ of $k$, which corresponds to the obtained cycles. The gain represents the factor, the parallel implementation gains---on 4 cores---compared to the serial one. All times are obtained taking the mean value of $50$ measurements using \lstinline!AbsoluteTiming!.}\label{tab:2048}
		\end{table}
		%
		%
		
	Choosing $k=l=2048$ and a randomly generated set of $m=2^{22}$ data values, the computational times for all divisors $i$ of $k$ are depicted in Table~\ref{tab:2048}. The factor denoted in the column after the serial one is obtained by comparison to the one-dimensional Fourier transform. The multidimensional data is obtained by sampling along the cycles of $\patSet{\mat{M}}$, which represent the sampling directions. This data is then used to perform the lattice Fourier transform and measuring its serial and parallel computation time. The last column denotes the parallel gain.

	While the serial implementation is equal to the one-dimensional case for $i=1$, it is slower for the cases, where the Fourier transform has to be applied to rows and columns. The parallel implementation is able to gain a factor of $1.2$ to $2.8$ on $4$ cores.

	The lattice Fourier transform reduces with a change of basis to the usual multivariate Fourier transform. Further investigations of the parallelization of the multivariate Fourier transform can be found e.g. based on the FFTW in~\citep{Pippig:2012}.

	\subsection{Wavelet transform}
	To extend the example from the previous subsection, we look at the Dirichlet type wavelets for different decompositions $\mat{M} = \mat{J}\mat{N}$, where
	\begin{equation*}
		\mat{J} \in \left\{
		\begin{pmatrix}	2&0\\0&1 \end{pmatrix},
		\begin{pmatrix}	1&0\\0&2 \end{pmatrix},
		\begin{pmatrix}	1&-1\\1&1 \end{pmatrix}	
		\right\} = \{\mat{J}_x,\mat{J}_y,\mat{J}_d\}=\mathcal J\text{.}
	\end{equation*}
	Let $\patSet{\mat{M}} = \lattice{\mat{M}}\cap[-\tfrac{1}{2},\tfrac{1}{2})^d$ and $\gGroup{\mat{M}} = \mat{M}\patSet{\mat{M}}$ be full sets of congruence class representants of $\lattice{\mat{M}}$ and $\mathbb Z^d$. We denote by $r(\vect{k}) = r_{\mat{M}}(\vect{k}), \vect{k}\in\mathbb Z^d$ the number of superficial hyperplanes of the parallelepiped $\mat{M}^T[-\tfrac{1}{2},\tfrac{1}{2})^d$ a point $\vect{h} = \calcMod{\vect{k}}{\gGroup{\mat{M}^T}}\in\gGroup{\mat{M}^T}$ is lying on, i.e.
	\begin{equation*}
		r(\vect{k}) = \#\left\{
		j : |\vect{h}^T\mat{M}^{-1}|_j = \frac{1}{2},\quad\vect{h} = \calcMod{\vect{k}}{\gGroup{\mat{M}^T}}
		\right\}\text{.}
	\end{equation*}
	The Dirichlet kernel $\varphi_{\mat{M}}: \mathbb T^d\to\mathbb R$ is given by its Fourier coefficients
	\begin{equation*}
		c_{\vect{k}}(\varphi_{\mat{M}}) = 
		\begin{cases}
			\frac{1}{\sqrt{m}}2^{-\frac{r(\mat{k})}{2}}&\mbox{ if } \mat{M}^{-T}\vect{k}\in[-\tfrac{1}{2},\tfrac{1}{2}]^d\text{,}\\
			0 & \mbox{ else,}
		\end{cases}\quad\vect{k}\in\mathbb Z^d\text{.}
	\end{equation*}
	The translates $T(\vect{y})\varphi_{\mat{M}},\ \vect{y}\in\patSet{\mat{M}}$ are linear independent and orthonormal~\citep[Theorem 6.4]{LangemannPrestin2010} and it holds $\varphi_{\mat{N}} \in V_{\mat{M}}^{\varphi_{\mat{M}}}$ due to 
	\begin{equation*}
		\begin{split}
			c_{\vect{k}}(\varphi_{\mat{N}}) &= \hat a_{\vect{h}}
			c_{\vect{k}}(\varphi_{\mat{M}}),\quad\vect{k}\in\mathbb Z^d,\quad\vect{h} = \calcMod{\vect{k}}{\gGroup{\mat{M}^T}}		
			\text{,}\quad\text{ where }\\
			\hat a_{\vect{h}} &= \begin{cases}
				2^{\frac{1}{2}(1+r_{\mat{M}}(\vect{h}) - r_{\mat{N}}(\vect{h}))}&\mbox{ if } \mat{N}^{-T}\vect{k} \in \left[-\tfrac{1}{2},\tfrac{1}{2}\right]^d\cap\patSet{\mat{M}^T}\text{, }\\
				0&\mbox{ else, }
			\end{cases}
		\end{split}
	\end{equation*}
	see~\eqref{eq:subspaces-coeffs} and~\citep[Lemma 6.5]{LangemannPrestin2010}. The corresponding wavelet $\psi_{\mat{N}}$ fulfilling $V_{\mat{M}}^{\varphi_{\mat{M}}} = V_{\mat{N}}^{\varphi_{\mat{N}}}\oplus V_{\mat{N}}^{\psi_{\mat{N}}}$ is given by
	\begin{equation*}
		c_{\vect{k}}(\psi_{\mat{N}}) = \begin{cases}
			c_{\vect{k}}(\varphi_{\mat{M}})\hat a_{\calcMod{\vect{k}+\mat{N} T\vect{g}}{\gGroup{\mat{M}^T}}}\euler^{-2\pi \imag \vect{k}^T\mat{N}^{-1}\vect{y}}&\mbox{ if } \mat{M}^{-T}\vect{k}\in[-\tfrac{1}{2},\tfrac{1}{2}]^d\text{,}\\
			0&\mbox{ else, }
		\end{cases}
	\end{equation*}
	where $\vect{y}\in\patSet{\mat{J}}\backslash\{\vect{0}\}$ and $\vect{g}\in\gGroup{\mat{J}^T}\backslash\{\vect{0}\}$ are uniquely determined due to $|\det{\mat{J}}| = 2$ for each $\mat{J}\in\mathcal J$.

	As test functions we choose two centered box splines $B_{\Xi}, B_{\Psi}: \mathbb R^2\to \mathbb R$ with $\Xi = \begin{pmatrix}
		\pi & 0 & \tfrac{\pi}{8}\\
		0 & \pi & \tfrac{\pi}{8} 
	\end{pmatrix}$, 	$\Psi = \begin{pmatrix}
			\pi & 0 & \tfrac{\pi}{8} & 0 & \tfrac{\pi}{8}\\
			0 & \pi & 0 & \tfrac{\pi}{8} & \tfrac{\pi}{8} 
		\end{pmatrix}$, see~\citep{boxSplines} and Fig.~\ref{fig:BoxSplines}. The function $B_{\Xi}$ is a piecewise linear polynomial, $B_{\Psi}$ a piecewise cubic polynomial.
	\begin{figure}
		\centering
		\includegraphics[width=.47\textwidth]{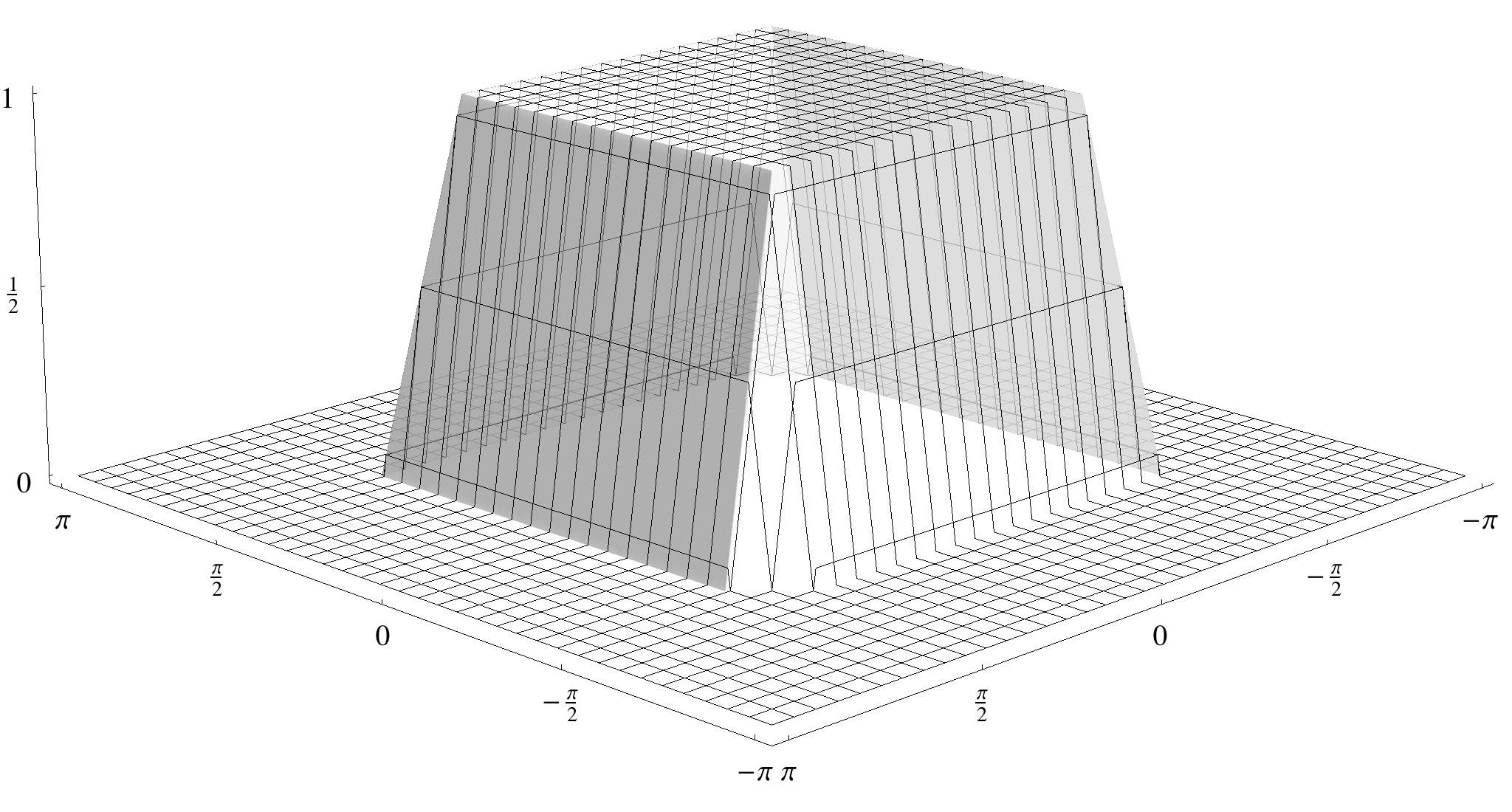}\hspace{.03\textwidth}
		\includegraphics[width=.47\textwidth]{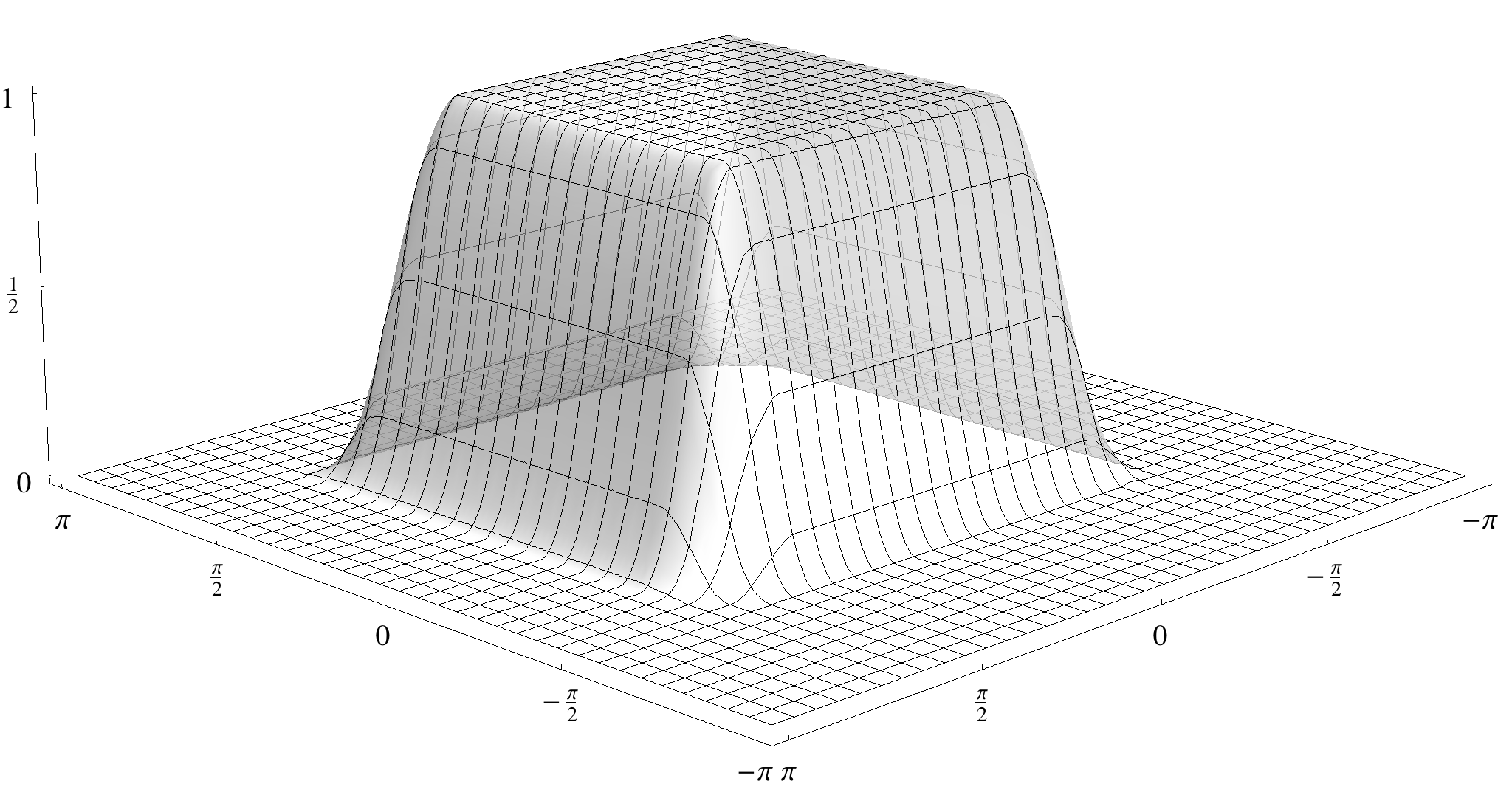}
		\caption{The centered box splines $B_{\Xi}$ (left) and $B_{\Psi}$ (right) used as test functions for the Dirichlet type wavelet decomposition.}
		\label{fig:BoxSplines}
	\end{figure}
		We choose $\mat{M} = \begin{pmatrix}512&0\\0&512\end{pmatrix}$ and
		 the function $\tilde{f_{\Xi}}\in V_{\mat{M}}^{\varphi_{\mat{M}}}$ 
		 is obtained by sampling $B_{\Xi}$ at the points $2\pi\vect{y},\ \vect{y}\in\patSet{\mat{M}}$. From these samples we perform a change of basis from the interpolatory or Lagrangian basis into the basis of orthonormal translates $\{T(\vect{y})\varphi_{\mat{M}}\}_{\vect{y}\in\patSet{\mat{M}}}$ by implementing the well known change of basis, which is for this case stated e.g. in~\citep[Corollary 3.7]{LangemannPrestin2010}.

		We perform the first step of the wavelet transform, i.e. we decompose $\tilde f = \tilde f_V+\tilde f_W$, where $\tilde f_V \in V_{\mat{N}}^{\varphi_{\mat{N}}}$ and $\tilde f_W \in V_{\mat{N}}^{\psi_{\mat{N}}}$. The function $\tilde f_W$ is in the $17$th wavelet space of a dyadic pyramid of wavelet spaces due to $|\det \mat{N}| = 2^{17}$.
		\begin{figure}
			\subfigure[{$|\tilde f_W|$} using {$\mat{J}_x$}]{
			\includegraphics[width=.315\textwidth]{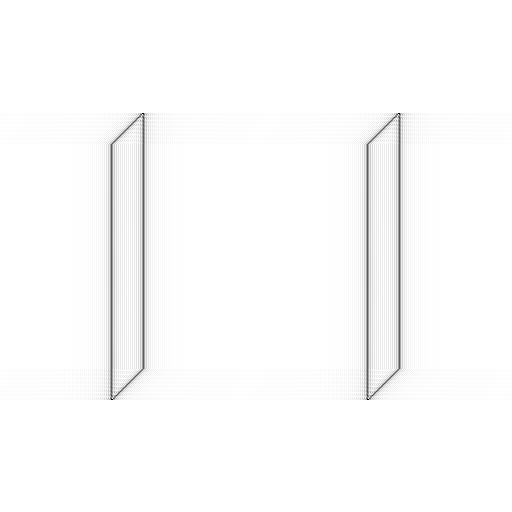}
			\label{subfig:B1X}
			}
			\subfigure[{$|\tilde f_W|$} using {$\mat{J}_y$}]{
			\includegraphics[width=.315\textwidth]{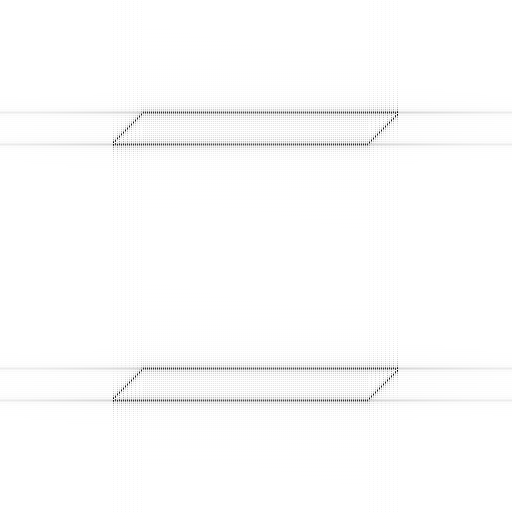}
			\label{subfig:B1Y}
			}
			\subfigure[{$|\tilde f_W|$} using {$\mat{J}_d$}]{
			\includegraphics[width=.315\textwidth]{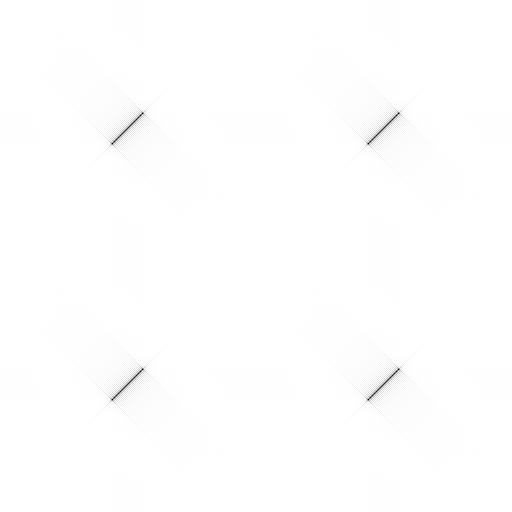}
			\label{subfig:B1D}
			}
			\caption{The three possible functions $|\tilde f_W|$ as wavelet spaces of $B_{\Xi}$ drawn as a contour plot, where white indicates zero and black the highest value. 
			}
			\label{fig:B1-Decomp}
		\end{figure}

		Depending on the matrix $\mat{J}$ we obtain different directional information in the wavelet space for the box spline $B_{\Xi}$: The lines of discontinuity of the first derivative are divided into three sets: The discontinuities along the diagonal can be obtained using $\mat{J}_d$ (see Fig.~\ref{subfig:B1D}), while discontinuities parallel to the axes are depicted in the wavelet spaces generated by $\mat{J}_x$ and $\mat{J}_y$ (see Figs.~\ref{subfig:B1X} and~\ref{subfig:B1Y}). The functions $\tilde f_W$ are shown in their absolute value to emphasise the nonzero parts of these wavelet spaces. The oscillations seen as grey values between the lines or to the rim are due to the Dirichlet type wavelets. The same holds for the diagonal discontinuities that are also visible in the first two decompositions.
	\begin{figure}
		\subfigure[{$|\tilde f_W|$} using {$\mat{J}_x$}]{
		\includegraphics[width=.315\textwidth]{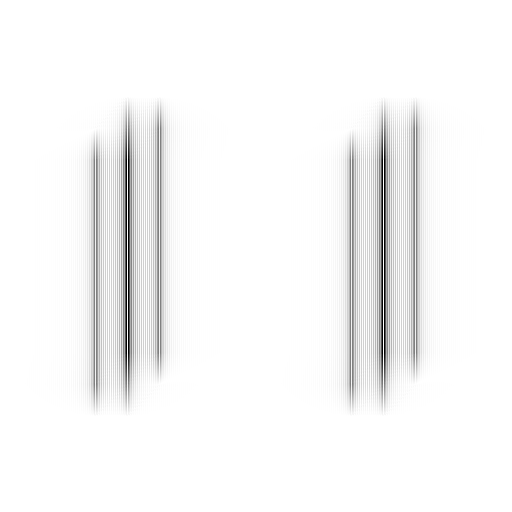}
		}
		\subfigure[{$|\tilde f_W|$} using {$\mat{J}_y$}]{
		\includegraphics[width=.315\textwidth]{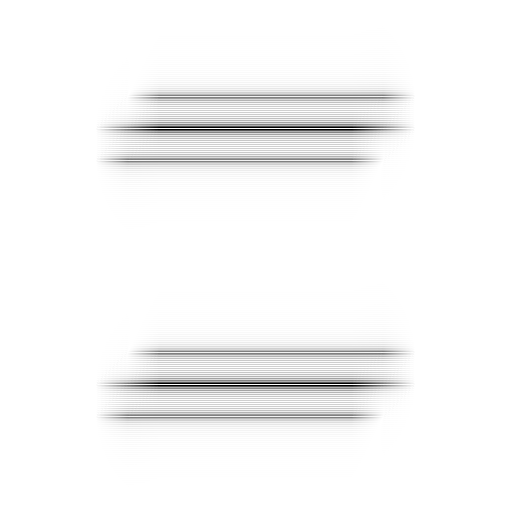}
		}
		\subfigure[{$|\tilde f_W|$} using {$\mat{J}_d$}]{
		\includegraphics[width=.315\textwidth]{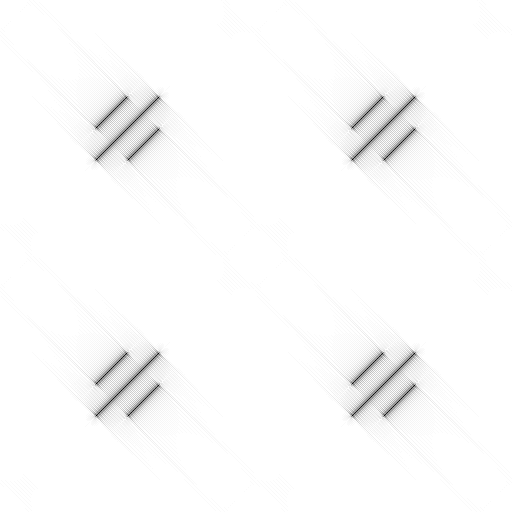}
		}
		\caption{The three possible functions $|\tilde f_W|$ as wavelet spaces of $B_{\Psi}$ drawn as a contour plot, where white indicates zero and black the highest value. 
		}
		\label{fig:B2-Decomp}
	\end{figure}

	For the second example we apply the Dirichlet type wavelets to $B_{\Psi}$ (see Fig.~\ref{fig:B2-Decomp}). The wavelet spaces contain the discontinuities of the third derivative
	. Here, we see the same oscillations as in the previous case, but the decomposition with respect to certain directions is more clear. The union of all three wavelet spaces represents the set of lines, on which the third derivative is discontinuous along the directional derivative orthogonal to the corresponding line.
	%
	%
\bibliographystyle{model1b-num-names}
\bibliography{references}
\end{document}